\documentclass[11pt]{article}
\usepackage{fullpage} % 1 inch margin; [cm] for 1.5 cm
\usepackage[indent]{parskip} % Zero \parindent and nonzero \parskip; [indent] for nonzero \parindent

\usepackage[T1]{fontenc} % To copy-and-paste accented characters, etc.
\usepackage{lmodern} % Additional accented characters, etc.
\usepackage{newtxtext}
\makeatletter%
\@ifpackageloaded{xeCJK}{}{%
    \usepackage[utf8]{inputenc}}
\makeatother

\usepackage[shortlabels,inline]{enumitem}

\usepackage{amsmath,amssymb,amsfonts}
\usepackage{dsfont}
\usepackage{bm}
\usepackage[mathscr]{eucal} % To change \mathcal. Use [mathscr] to add \mathscr instead
\usepackage{amsthm}
\usepackage[mleftright]{diffcoeff}
	\difdef{f, s, fp, c}{}{%
		outer-Ldelim=\left.,%
		outer-Rdelim=\right|,%
		sub-nudge=0mu}%
	\difdef{c}{J}{%
		op-symbol=\rmJ,%
		op-sub-nudge=-2mu}%
\usepackage[group-separator={,}]{siunitx} % To typeset numbers and units

\usepackage{xcolor}
\usepackage[pdftex]{graphicx} % Set option to dvips if the output is dvi
\usepackage{pgfplots}
	\pgfplotsset{compat=1.18}
	\usetikzlibrary{arrows}
\usepackage[export]{adjustbox}
\makeatletter%
\@ifclassloaded{IEEEtran}
	{\usepackage[caption=false,font=footnotesize]{subfig}}
	{\usepackage{subcaption}
	\captionsetup[figure]{font=footnotesize}}
\makeatother

\makeatletter%
\@ifpackageloaded{natbib}{}{%
    \usepackage[noadjust]{cite}} % Sort and properly "compress/range" citation numbers
\makeatother

\usepackage{lipsum}
\usepackage{nameref}
\usepackage{silence}
\usepackage{verbatim} % For environment {comment}
\usepackage{url} % For \url
\usepackage{enotez}
	\setenotez{
		backref=true,
		counter-format=Roman,
		split=section,
		split-title={Notes for Section <ref>},
	}

\makeatletter
\newcommand{\disablepackage}[2]{%
  \disable@package@load{#1}{#2}}
\makeatother

    \setlist{nosep,leftmargin=*}

\theoremstyle{plain}
\newtheorem{thm}{Theorem}

\newtheorem{cor}{Corollary}
\newtheorem{lem}{Lemma}

\theoremstyle{definition}
\newtheorem{dfn}{Definition}

\theoremstyle{remark}
\newtheorem{rmk}{Remark}

\newtheorem*{clm*}{Claim}

\makeatletter%
\@tempcnta=\@ne
\def\@nameedef#1{\expandafter\edef\csname #1\endcsname}
\loop\ifnum\@tempcnta<27
    \@nameedef{bb\@Alph\@tempcnta}{{\noexpand\mathbb{\@Alph\@tempcnta}}}
    \@nameedef{bf\@Alph\@tempcnta}{{\noexpand\mathbf{\@Alph\@tempcnta}}}
    \@nameedef{bm\@Alph\@tempcnta}{{\noexpand\bm{\@Alph\@tempcnta}}}
    \@nameedef{cal\@Alph\@tempcnta}{{\noexpand\mathcal{\@Alph\@tempcnta}}}
    \@nameedef{scr\@Alph\@tempcnta}{{\noexpand\mathscr{\@Alph\@tempcnta}}}
    \@nameedef{ds\@Alph\@tempcnta}{{\noexpand\mathds{\@Alph\@tempcnta}}}
    \@nameedef{frak\@Alph\@tempcnta}{{\noexpand\mathfrak{\@Alph\@tempcnta}}}
    \@nameedef{rm\@Alph\@tempcnta}{{\noexpand\mathrm{\@Alph\@tempcnta}}}
    \@nameedef{sf\@Alph\@tempcnta}{{\noexpand\mathsf{\@Alph\@tempcnta}}}
    \@nameedef{op\@Alph\@tempcnta}{{\noexpand\operatorname{\@Alph\@tempcnta}}}
    \@nameedef{hat\@Alph\@tempcnta}{{\noexpand\hat{\@Alph\@tempcnta}}}
    \@nameedef{tilde\@Alph\@tempcnta}{{\noexpand\tilde{\@Alph\@tempcnta}}}
    \@nameedef{ul\@Alph\@tempcnta}{{\noexpand\underline{\@Alph\@tempcnta}}}
    \@nameedef{ol\@Alph\@tempcnta}{{\noexpand\overline{\@Alph\@tempcnta}}}
	\@nameedef{v\@Alph\@tempcnta}{{\noexpand\vec{\@Alph\@tempcnta}}}
    \advance\@tempcnta\@ne
\repeat
\makeatother

\let\vec\undefined
\newcommand*{\vec}[1]{\bm{#1}}

\newcommand*{\Z}{\dsZ}

\newcommand*{\R}{\dsR}

\let\ForAll\forall
\let\forall\undefined
\DeclareMathOperator\forall{\ForAll}
\let\Exists\exists
\let\exists\undefined
\DeclareMathOperator\exists{\Exists}

\DeclareMathOperator{\tr}{tr}
\DeclareMathOperator{\rank}{rank}

\usepackage{empheq}
\usepackage[colorlinks,citecolor=cyan]{hyperref}

\DeclareMathOperator{\eigMax}{\overline\lambda}
\DeclareMathOperator{\eigMin}{\underline\lambda}
\DeclareMathOperator{\SVMin}{\underline\sigma}

\title{\LARGE\bf Data-driven control of linear systems using quantized data}
\author{Zhenghao Li and Guosong Yang%
\thanks{The authors are with the Department of Electrical and Computer Engineering, Rutgers University--New Brunswick, Piscataway, NJ 08854 USA (e-mails: {\tt \{zhenghao.li,\,guosong.yang\}@rutgers.edu}).}%
}
\date{}

\begin{document}
\maketitle

\begin{abstract}
This paper studies data-driven stabilization of unknown discrete-time linear systems using only quantized state measurements that take values in finite sets.
The proposed approach consists of two stages.
In the controller design stage, we collect quantized data while maintaining a prescribed quantization error bound and use them to formulate a semidefinite program (SDP).
We establish a verifiable condition under which any feasible solution to the SDP yields a stabilizing feedback gain, and show that a stabilizing gain can always be obtained when the quantization error is sufficiently small.
In the stabilization stage, a Lyapunov-based quantizer update rule is developed to guarantee exponential convergence under quantized state feedback.
As a key feature, the number of quantization cells remains finite in both stages and constant during stabilization.
Simulation results illustrate the effectiveness of the proposed approach.
\end{abstract}

%\tableofcontents

%%%%%%%%%%%%%%%%%%%%%%%%%%%%%%%%%%%%%%%%%%%%%%%%%%%%%%%%%%%%%%%%%%%%%%%%%%%%%%%%
\section{Introduction}\label{sec:intro}
Data-driven methods provide powerful tools for controlling systems where accurate models are unavailable or difficult to obtain \cite{HouWang2013,TangDaoutidis2022}.
Indirect approaches based on system identification followed by model-based control design may suffer from model mismatch, which can compromise closed-loop stability and performance guarantees.
In contrast, direct data-driven methods bypass explicit model identification and design controllers directly from data \cite{ZieglerNichols1942,Astrom1991}.
A key development in this direction is Willems' fundamental lemma \cite{WillemsRapisardaMarkovskyMoor2005}, which shows that the behavior of a controllable linear system can be fully characterized using data from a sufficiently excited trajectory.
This result has led to a behavioral framework for data-driven analysis and control of linear systems \cite{PersisTesi2020,MarkovskyDorfler2021}, with applications to linear quadratic regulation \cite{PersisTesi2020,PersisTesi2021} and model predictive control \cite{CoulsonLygerosDorfler2019}, and extensions to nonlinear systems \cite{MartinSchonAllgower2023} and switched systems \cite{RotuloPersisTesi2022}.

Modern control systems are increasingly implemented over digital communication networks, where data must be transmitted through rate-limited channels \cite{NairFagnaniZampieriEvans2007}.
This has motivated extensive research on control under data-rate constraints.
A common approach is to quantize measurements into finite sets before transmission, thereby enabling finite data rates \cite{Liberzon2003Book}. 
Foundational results using this approach are surveyed in \cite{NairFagnaniZampieriEvans2007,JiangLiu2013}, with applications in areas such as power electronics, robotics, and aerospace \cite{Fu2024}.
Recent developments span robust control \cite{ChangHuangWangLiu2020,SongChang2020,ZamaniYang2026}, event-triggered control \cite{LiuJiang2019}, and switched systems \cite{Liberzon2014,YangLiberzon2018}.

These considerations motivate the study of data-driven control under data-rate constraints.
Most existing data-driven control methods rely on unquantized measurements, which limits their applicability in communication-constrained environments.
Extensions to noisy data have introduced robustness guarantees using tools such as semidefinite programming \cite{PersisTesi2021}, the matrix S-lemma \cite{WaardeCamlibelMesbahi2022}, and data informativity \cite{WaardeEisingTrentelmanCamlibel2020}.
However, these approaches do not explicitly address data-rate constraints.

Data-driven control under quantized measurements remains relatively underexplored. 
In \cite{LiZhaoYou2024}, stabilization of unknown linear systems with quantized control inputs is studied using noisy unquantized trajectory data.
Related works consider superstabilization using partially quantized data \cite{MillerZhengSznaierHixenbaugh2024},
event-triggered control with quantized state transmission \cite{XuSunWeDuan2024}, 
and the trade-off between quantization densities for controller design and feedback stabilization \cite{TakakCetinkayaIshii2025}.
These works employ static quantizers with infinitely many levels in the feedback loop, 
a limitation explicitly identified in \cite{MillerZhengSznaierHixenbaugh2024}, 
and therefore do not achieve finite data rates.
Moreover, \cite{LiZhaoYou2024,MillerZhengSznaierHixenbaugh2024,XuSunWeDuan2024} rely at least partly on unquantized data to compute feedback gains, leaving a central challenge in data-driven control under data-rate constraints unresolved.

We develop a data-driven framework for stabilizing unknown discrete-time linear systems using only quantized state measurements that take values in finite sets.

First, in the controller design stage, quantized data are collected under a prescribed quantization error bound and used to formulate a semidefinite program (SDP). 
We establish a verifiable condition under which any feasible solution to the SDP yields a stabilizing feedback gain, together with a Lyapunov matrix used for quantizer design in the stabilization stage. 
We further show that, under controllability and persistent excitation, the SDP is feasible and every optimal solution yields a stabilizing feedback gain when the quantization error is sufficiently small.

Second, in the stabilization stage, a Lyapunov-based quantizer update rule using a fixed number of quantization cells is developed. 
We establish that, when this number is sufficiently large, the closed-loop system under quantized state feedback converges exponentially to the origin.

In both stages, the number of quantization cells is finite, and it remains constant during stabilization. 
Thus, the proposed approach provides data-driven stabilization using only quantized state measurements under finite data rates.

The remainder of the paper is organized as follows.
Section~\ref{sec:pre} introduces the system model, control objective, and quantization scheme.
Section~\ref{sec:ddc} develops data-driven controller design and establishes conditions for obtaining stabilizing feedback gains from quantized data.
Section~\ref{sec:stab} develops quantized feedback stabilization and establishes exponential convergence.
Section~\ref{sec:sim} presents simulation results, and Section~\ref{sec:end} concludes the paper.

\emph{Notation:}
For a vector $ v $, denote by % = (v_1, \ldots, v_n) \in \R^n
$ \|v\| $ % := \sqrt{v^\top v}
and
$ \|v\|_\infty $ % := \max_{1 \leq i \leq n} |v_i|
its Euclidean and infinity norms, respectively.
For a matrix $ M \in \R^{n \times m} $, denote by % = [a_{ij}]
$ \SVMin(M) $, % = \min_{\|x\| = 1} \|M x\| = \sqrt{\eigMin(M^\top M)} if n \geq m and = \min_{\|x\| = 1} \|M^\top x\| = \sqrt{\eigMin(M M^\top)} if n \leq m
$ \|M\| $, % := \max_{\|x\| = 1} \|M x\| = \sqrt{\eigMax(M^\top M)} = \sqrt{\eigMax(M M^\top)}
and
$ \|M\|_\rmF $ % := \sqrt{\sum_{i=1}^{n} \sum_{j=1}^{m} |a_{ij}|^2} = \sqrt{\tr(M^\top M)}
its smallest singular value, spectral norm (i.e., largest singular value), and Frobenius norm, respectively.
For a symmetric matrix $ S \in \R^{n \times n} $, denote by $ \eigMax(S) $ and $ \eigMin(S) $ its largest and smallest eigenvalues, respectively.
For symmetric matrices $ P, R \in \R^{n \times n} $, write $ P \succ R $ and $ P \succeq R $ when $ P - R $ is positive definite and positive semidefinite, respectively.

%%%%%%%%%%%%%%%%%%%%%%%%%%%%%%%%%%%%%%%%%%%%%%%%%%%%%%%%%%%%%%%%%%%%%%%%%%%%%%%%
\section{Problem formulation}\label{sec:pre}
Consider a discrete-time linear time-invariant (LTI) system
\begin{equation}\label{eq:sys}
	x(k+1) = A x(k) + B u(k),
	\quad
	x(0) = x_0,
	\qquad
	k \in \Z_{\geq 0},
\end{equation}
where $ x(k) \in \R^{n} $ is the state and $ u(k) \in \R^{m}$ is the control input. We assume that the dimensions $ n $ and $ m $ are known, but the matrices $ A \in \R^{n \times n} $ and $ B \in \R^{n \times m} $ are unknown. Instead, only upper bounds on their norms are known:
\begin{equation}\label{eq:sys-mx-bnd}
	\|A\| \leq \alpha,
	\qquad
	\|B\| \leq \beta.
\end{equation}
Our goal is to stabilize the unknown LTI system \eqref{eq:sys} using quantized state measurements $ \hat x(k) = q_k(x(k)) $, where $ q_k(\cdot) $ is a time-varying quantizer that maps $ \R^n $ to a \emph{finite} set of points at each time $ k $.

The proposed approach consists of two stages.
First, in the \emph{controller design stage}, we collect input and quantized state data and use them to compute a stabilizing feedback gain.
The quantizer update rule in this stage is designed to maintain a prescribed quantization error bound.
Second, in the \emph{stabilization stage}, we employ a Lyapunov-based quantizer update rule with a fixed number of quantization cells and establish exponential convergence under quantized state feedback.
The controller design stage need not be performed offline.
For concreteness, we consider stabilization beginning directly after controller design without resetting the system; 
the designed controller can also be applied after a system reset or at a later time.

The quantization scheme is introduced in Section~\ref{ssec:pre-quant}. The design and stabilization stages are developed in Sections~\ref{sec:ddc} and~\ref{sec:stab}, respectively.

%%%%%%%%%%%%%%%%%%%%%%%%%%%%%%%%%%%%%%%%
\subsection{Quantization scheme}\label{ssec:pre-quant}
The initial state $ x_0 $ is unknown. We assume that there exists a known bound $ E_0 > 0 $ such that
\[
	\|x_0\|_\infty \leq E_0.
\]
If unavailable, such a bound can be obtained in finite time using search techniques in quantized control; see, e.g., \cite{Liberzon2003Book}.

At each time $ k $, we maintain a bound $ E_k $ such that
\begin{equation}\label{eq:quant-bnd}
	\|x(k)\|_\infty \leq E_k,
\end{equation}
that is, $ x(k) $ lies in the hypercube
\[
	\calR(E_k) := \{v \in \R^n: \|v\|_\infty \leq E_k\},
\]
which serves as the quantization range.
Given $ E_k $, quantizer update rules are designed to determine $ E_{k+1} $ such that $ x(k+1) \in \calR(E_{k+1}) $. 
The update rules differ between the two stages and are derived in Sections~\ref{ssec:ddc-quant} and~\ref{ssec:stab-quant}.

The quantizer partitions $ \calR(E_k) $ into $ N_k^n $ equal hypercubic cells, with $ N_k $ cells per dimension, where $ N_k $ is a positive integer. The quantized measurement $ \hat x(k) $ is defined as the center of the cell containing $ x(k) $. Consequently, the quantization error
\[ 
	d(k) := \hat x(k) - x(k)
\]
satisfies
\begin{equation}\label{eq:quant-err-bnd}
	\|d(k)\|_\infty
	= \|\hat x(k) - x(k)\|_\infty
	\leq \frac{E_k}{N_k}.
\end{equation}
Since the quantizer takes values in a set of cardinality $ N_k^n $, the measurement at time $ k $ can be represented using a finite number of bits.

%%%%%%%%%%%%%%%%%%%%%%%%%%%%%%%%%%%%%%%%%%%%%%%%%%%%%%%%%%%%%%%%%%%%%%%%%%%%%%%%
\section{Data-driven controller design}\label{sec:ddc}

%%%%%%%%%%%%%%%%%%%%%%%%%%%%%%%%%%%%%%%%
\subsection{Data-driven control preliminaries}\label{ssec:ddc-pre}
During the controller design stage, we excite the unknown LTI system \eqref{eq:sys} with an input sequence $ \{u(k)\}_{k=0}^{T-1} $ satisfying
\begin{equation}\label{eq:input-bnd}
	\|u(k)\| \leq \bar u,
	\qquad
	\forall k \in \{0, 1, \ldots, T-1\},
\end{equation}
and collect the quantized state measurement sequence $ \{\hat x(k)\}_{k=0}^{T} $.
Define the corresponding data matrices
\begin{equation}\label{eq:data-mx-dfn}
\begin{aligned}
	\hat X_0 &:=
	\begin{bmatrix}
		\hat x(0) & \hat x(1) & \cdots & \hat x(T-1)
	\end{bmatrix}, \\
	\hat X_1 &:=
	\begin{bmatrix}
		\hat x(1) & \hat x(2) & \cdots & \hat x(T)
	\end{bmatrix}.
\end{aligned}
\end{equation}
The data matrices $ U_0 \in \R^{m \times T} $, $ X_0, X_1 \in \R^{n \times T} $, and $ D_0, D_1 \in \R^{n \times T} $ for the input $ u(k) $, unquantized state $ x(k) $, and quantization error $ d(k) $, respectively, are defined analogously. The matrices $ X_0, X_1, D_0, D_1 $ are introduced only for analysis and are not available for control synthesis.

A key condition for computing a stabilizing controller is
\begin{equation}\label{eq:rank-cond-quant}
	\rank \begin{bmatrix} 
		U_0 \\ \hat X_0 
	\end{bmatrix} 
	= n + m.
\end{equation}
In the absence of quantization, it is known from \cite{WillemsRapisardaMarkovskyMoor2005} that this rank condition holds when the input sequence is sufficiently exciting.
\begin{dfn}[\cite{WillemsRapisardaMarkovskyMoor2005}]\label{dfn:pe}
The input sequence $ \{u(k)\}_{k=0}^{T-1} $ is \emph{persistently exciting (PE)} of order $ L $ if the Hankel matrix
\begin{equation*}\label{eq:Hankel}
	\scrU_{0, L, T-L+1} :=
	\begin{bmatrix}
		u(0) & u(1) & \cdots & u(T-L) \\
		u(1) & u(2) & \cdots & u(T-L+1) \\
		\vdots & \vdots & \ddots & \vdots \\
		u(L-1) & u(L) & \cdots & u(T-1)
	\end{bmatrix}
\end{equation*}
has full row rank $ m L $.
\end{dfn}
\begin{lem}[{\cite[Cor.~2]{WillemsRapisardaMarkovskyMoor2005}}]\label{lem:fund-lem}
If the LTI system \eqref{eq:sys} is controllable and the input sequence $ \{u(k)\}_{k=0}^{T-1} $ is PE of order $ n + 1 $, then
\begin{equation}\label{eq:rank-cond}
	\rank \begin{bmatrix} 
		U_0 \\ X_0 
	\end{bmatrix}
	= n + m.
\end{equation}
\end{lem}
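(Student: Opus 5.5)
The plan is the standard argument behind Willems' fundamental lemma, specialized to the state–input data matrix. Argue by contradiction. Suppose the rank condition \eqref{eq:rank-cond} fails. Then there is a nonzero vector $(\eta,\xi) \in \R^{m} \times \R^{n}$ with $\eta^\top U_0 + \xi^\top X_0 = 0$, i.e.
\[
	\eta^\top u(k) + \xi^\top x(k) = 0 \quad \text{for all } k \in \{0,\ldots,T-1\}.
\]
Since the trajectory obeys \eqref{eq:sys}, we can propagate this left annihilator forward in time and obtain a nonzero vector in the left kernel of a Hankel matrix of inputs. This contradicts persistent excitation of order $n+1$.

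\textbf{Step 1: the case $\xi = 0$.} Then $\eta \neq 0$ and $\eta^\top U_0 = 0$. But $U_0$ consists of the first $m$ rows of $\scrU_{0,n+1,T}$ (up to truncating columns). PE of order $n+1$ implies PE of order $1$, because the top block rows of a full-row-rank Hankel matrix with fewer columns remain full row rank. So $U_0$ restricted to its first $T-n$ columns already has full row rank $m$, which is a contradiction.

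\textbf{Step 2: the case $\xi \neq 0$.} Let $\chi(s) = s^n + a_{n-1}s^{n-1} + \cdots + a_0$ be the characteristic polynomial of $A$. For each $j = 0,\ldots,T-n-1$, write
\[
	x(j+i) = A^i x(j) + \sum_{l=0}^{i-1} A^{i-1-l} B\, u(j+l).
\]
Take the combination $\sum_{i=0}^{n} a_i \bigl(\eta^\top u(j+i) + \xi^\top x(j+i)\bigr) = 0$ with $a_n = 1$. By Cayley--Hamilton, $\sum_i a_i A^i = 0$, so every $x(j)$ term cancels. What remains is $w^\top \scrU_{0,n+1,T-n} = 0$, with $w = (w_0,\ldots,w_n)$ given by
\[
	w_n = \eta, \qquad
	w_i = a_i \eta + \sum_{l=i+1}^{n} a_l\, B^\top (A^{l-1-i})^\top \xi, \quad i < n.
\]
By PE of order $n+1$, this forces $w = 0$. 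Hence $\eta = 0$, and then
\[
	w_{n-1} = B^\top \xi = 0, \quad w_{n-2} = B^\top A^\top \xi + a_{n-1} B^\top \xi = 0, \ldots
\]
Backward induction then gives $\xi^\top A^{i} B = 0$ for $i = 0,\ldots,n-1$. Controllability of $(A,B)$ implies $\xi = 0$, a contradiction.

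The main obstacle is bookkeeping in Step 2. The coefficients $w_i$ must be computed correctly, and their triangular structure (each $w_i$ involves $\xi^\top A^{l} B$ only for $l \le n-1-i$, with leading coefficient $a_n = 1$) must be verified so that the backward induction genuinely recovers $\xi^\top A^i B = 0$ one power at a time. One should also check that the shift indices stay within $0,\ldots,T-1$, which requires the Hankel matrix to have $T-n$ columns; this is exactly $\scrU_{0,n+1,T-n}$ from Definition~\ref{dfn:pe}.
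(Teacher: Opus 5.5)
Your proof is correct. The paper gives no proof of its own and imports the result from \cite[Cor.~2]{WillemsRapisardaMarkovskyMoor2005}; your argument is essentially the standard proof of that corollary: propagate a left annihilator of $[\,U_0^\top \; X_0^\top\,]^\top$ through the dynamics, eliminate the state by Cayley--Hamilton, apply PE of order $n+1$ to $\scrU_{0,n+1,T-n}$, then use the triangular structure of the $w_i$ and controllability. Two minor points: Step~1 is redundant, since the Step~2 computation never uses $\xi \neq 0$, and the Hankel matrix in Step~1 should be $\scrU_{0,n+1,T-n}$ rather than $\scrU_{0,n+1,T}$.
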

We assume
\begin{equation}\label{eq:ddc-T-cond}
	T \geq m (n+1) + n,
\end{equation}
which is necessary for $ \{u(k)\}_{k=0}^{T-1} $ to be PE of order $ n + 1 $.
Theorem~\ref{thm:ddc-feas} below establishes that, under the assumptions of Lemma~\ref{lem:fund-lem}, the quantized rank condition \eqref{eq:rank-cond-quant} also holds when the quantization errors are sufficiently small.

%%%%%%%%%%%%%%%%%%%%%%%%%%%%%%%%%%%%%%%%
\subsection{Quantized data collection}\label{ssec:ddc-quant}
For $ k < T $, by \eqref{eq:sys-mx-bnd}, \eqref{eq:quant-bnd}, and \eqref{eq:input-bnd},
\begin{align*}
	\|x(k+1)\|_\infty 
	&= \|A x(k) + B u(k)\|_\infty \\
	&\leq \|A x(k) + B u(k)\| \\
	&\leq \alpha \|x(k)\| + \beta \bar u \\
	&\leq \alpha \sqrt{n} \|x(k)\|_\infty + \beta \bar u \\
	&\leq \alpha \sqrt{n} E_k + \beta \bar u.
\end{align*}
To ensure that \eqref{eq:quant-bnd} holds for all $ k \leq T $, we let
\begin{equation}\label{eq:quant-bnd-ddc}
	E_{k+1} := \alpha \sqrt{n} E_k + \beta \bar u,
	\qquad
	k = 0, 1, \ldots, T-1.
\end{equation}
To maintain a prescribed quantization error bound $ \delta > 0 $, we let
\begin{equation}\label{eq:quant-lvl-ddc}
	N_k := \left\lceil \frac{E_k}{\delta} \right\rceil,
	\qquad
	k = 0, 1, \ldots, T,
\end{equation}
where $ \lceil \cdot \rceil $ is the ceiling function.
Then \eqref{eq:quant-err-bnd} gives
\begin{equation}\label{eq:quant-err-bnd-ddc}
	\|d(k)\|_\infty \leq \delta
	\qquad
	\forall k \in \{0, 1, \ldots, T\}.
\end{equation}

The update rule \eqref{eq:quant-bnd-ddc} yields a bound on $ \|x(k+1)\|_\infty $ that is independent of the collected data but can lead to exponential growth of $ E_k $ and $ N_k $ during this stage.
A tighter update rule can be obtained by exploiting the available quantized measurement $ \hat x(k) $ and letting
\begin{equation}\label{eq:quant-bnd-ddc-data}
	E_{k+1} := \alpha (\|\hat x(k)\| + \sqrt{n} \delta) + \beta \bar u, 
	\qquad
	k = 0, 1, \ldots, T-1.
\end{equation}
Indeed, \eqref{eq:quant-err-bnd-ddc} implies
\begin{align*}
	\|x(k)\|
	&\leq \|\hat x(k)\| + \|d(k)\| \\
	&\leq \|\hat x(k)\| + \sqrt{n} \|d(k)\|_\infty \\
	&\leq \|\hat x(k)\| + \sqrt{n} \delta.
\end{align*}
Therefore, \eqref{eq:quant-bnd-ddc-data} also ensures that \eqref{eq:quant-bnd} holds for all $ k \leq T $.
In practice, this update can significantly reduce the number of quantization cells.

Although the number of quantization cells may vary during this stage, $ E_k $ and $ N_k $ can be determined at each time from their update rules and, when \eqref{eq:quant-bnd-ddc-data} is used, previously transmitted quantized measurements; 
hence they need not be communicated separately.
Thus, only the quantizer output at each time needs to be transmitted, and it can be represented using a finite number of bits.
The number of quantization cells remains constant during the stabilization stage in Section~\ref{sec:stab}.

%%%%%%%%%%%%%%%%%%%%%%%%%%%%%%%%%%%%%%%%
\subsection{Data-driven controller synthesis}\label{ssec:ddc-opt}
We adopt the optimization-based framework in \cite{PersisTesi2021} for stabilizing unknown LTI systems in the presence of disturbances and extend it to quantized state measurements.

In the absence of quantization, a stabilizing controller can be computed by solving the following \emph{semidefinite program (SDP)}:
\begin{subequations}\label{eq:sdp-unquant}
\begin{align}
	&\min_{(\gamma, Q, P, L)} \gamma \label{eq:sdp-unquant-val}\\
	&\text{subject to} \notag\\
	&X_1 Q P^{-1} Q^{\top} X_1^{\top} - P + I \preceq 0, \label{eq:sdp-unquant-lya}\\
	&P \succeq I, \label{eq:sdp-unquant-p}\\
	&L - U_0 Q P^{-1} Q^{\top} U_0^{\top} \succeq 0, \label{eq:sdp-unquant-l}\\
	&X_0 Q = P, \label{eq:sdp-unquant-q}\\
	&\tr(P) + \tr(L) \leq \gamma, \label{eq:sdp-unquant-gamma}
\end{align}
\end{subequations}
where $ U_0 $ and $ X_0, X_1 $ are the input and unquantized state data matrices introduced in Section~\ref{ssec:ddc-pre}.
The relevant properties are summarized in the next lemma, which follows from \cite[Lem.~2 and~3]{PersisTesi2021}. 
A proof is provided here for completeness.
\begin{lem}
\begin{enumerate}
	\item Let $ (\gamma, Q, P, L) $ be any feasible solution to \eqref{eq:sdp-unquant}, and define $ K := U_0 Q P^{-1} $. Then $ A + B K $ is Schur stable (i.e., all eigenvalues lie in the open unit disk).
	\item Suppose that the rank condition \eqref{eq:rank-cond} holds. For any $ K $ such that $ A + B K $ is Schur stable, \eqref{eq:sdp-unquant} admits a feasible solution $ (\gamma, Q, P, L) $ satisfying $ K = U_0 Q P^{-1} $.
\end{enumerate}
\end{lem}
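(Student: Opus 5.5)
For part 1 we rewrite the closed-loop matrix using the unquantized data identity $X_1 = A X_0 + B U_0$. Take a feasible $(\gamma, Q, P, L)$ and set $K := U_0 Q P^{-1}$. Constraint \eqref{eq:sdp-unquant-p} gives $P \succeq I \succ 0$, so $P^{-1}$ exists. Constraint \eqref{eq:sdp-unquant-q} gives $X_0 Q P^{-1} = I$. Hence
\[
	X_1 Q P^{-1} = A X_0 Q P^{-1} + B U_0 Q P^{-1} = A + B K .
\]
Since $P$ is symmetric, $X_1 Q P^{-1} Q^\top X_1^\top = (X_1 Q P^{-1}) P (X_1 Q P^{-1})^\top = (A+BK) P (A+BK)^\top$. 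Constraint \eqref{eq:sdp-unquant-lya} then reads
\[
	(A+BK) P (A+BK)^\top - P \preceq -I \prec 0 .
\]
This is a strict Lyapunov inequality for $(A+BK)^\top$ with $P \succ 0$. Therefore $(A+BK)^\top$, and hence $A+BK$, is Schur stable. For a self-contained argument, take a left eigenvector $v^* (A+BK) = \lambda v^*$ and obtain $(|\lambda|^2 - 1)\, v^* P v \le -\|v\|^2 < 0$, so $|\lambda| < 1$. Constraints \eqref{eq:sdp-unquant-l} and \eqref{eq:sdp-unquant-gamma} play no role here.

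For part 2 we construct the variables explicitly. Since $A+BK$ is Schur, so is its transpose. The discrete Lyapunov equation $(A+BK)\bar P(A+BK)^\top - \bar P + I = 0$ therefore has a unique solution $\bar P = \sum_{j\ge0} (A+BK)^j (A+BK)^{\top j} \succeq I$. Any positive scaling $P := c\bar P$ with $c \ge 1$ also works: it gives $(A+BK)P(A+BK)^\top - P + I = (1-c) I \preceq 0$ and $P \succeq I$. Taking $P := \bar P$ suffices.

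Next we need $Q$ with $X_0 Q = P$ and $U_0 Q = K P$, that is,
\[
	\begin{bmatrix} U_0 \\ X_0 \end{bmatrix} Q = \begin{bmatrix} K \\ I \end{bmatrix} P .
\]
By the rank condition \eqref{eq:rank-cond}, the stacked data matrix has full row rank $n+m$. It therefore has a right inverse, for example the Moore–Penrose pseudoinverse, and such a $Q$ exists. This solvability step is the only place where the rank condition is needed, and I expect it to be the main (though mild) point: the rank condition is exactly what allows prescribing both $X_0 Q$ and $U_0 Q$ simultaneously.

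With this $Q$ we get $U_0 Q P^{-1} = K$. As in part 1, $X_1 Q P^{-1} = A + BK$, so \eqref{eq:sdp-unquant-lya} holds by the choice of $P$, and \eqref{eq:sdp-unquant-p} and \eqref{eq:sdp-unquant-q} hold by construction. Finally, set $L := U_0 Q P^{-1} Q^\top U_0^\top = K P K^\top$ and $\gamma := \tr(P) + \tr(L)$. Then \eqref{eq:sdp-unquant-l} and \eqref{eq:sdp-unquant-gamma} hold with equality, so $(\gamma, Q, P, L)$ is feasible and satisfies $K = U_0 Q P^{-1}$.
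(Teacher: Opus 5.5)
Your proof is correct and follows the paper's argument essentially step for step. In part 1 you use $X_0 Q = P$, $U_0 Q = KP$ and $X_1 = AX_0 + BU_0$ to turn \eqref{eq:sdp-unquant-lya} into a Lyapunov inequality for $A+BK$. In part 2 you take the Lyapunov solution $P \succeq I$, build $Q$ from a right inverse (the pseudoinverse) of $\begin{bmatrix} U_0 \\ X_0 \end{bmatrix}$, and set $L = KPK^\top$, exactly as the paper does; your left-eigenvector argument and series formula for $P$ simply fill in details the paper leaves implicit.
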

\begin{proof}
\begin{enumerate}[wide]
\item
By \eqref{eq:sdp-unquant-q} and $ U_0 Q = K P $,
\[
	X_1 Q = (A X_0 + B U_0) Q = (A + B K) P.
\]
Hence \eqref{eq:sdp-unquant-lya} implies
\[
	(A + B K) P (A + B K)^\top - P + I \preceq 0,
\]
where $ P \succ 0 $. Therefore, $ A + B K $ is Schur stable.

\item
Since $ A + B K $ is Schur stable, there exists a unique $ P \succ 0 $ satisfying the discrete-time Lyapunov equation
\[ (A + B K) P (A + B K)^\top - P + I = 0. \]
Then \eqref{eq:sdp-unquant-p} holds as $ P = I + (A + B K) P (A + B K)^\top \succeq I $.
Let
\[
	Z := \begin{bmatrix} U_0 \\ X_0 \end{bmatrix}.
\]
Under \eqref{eq:rank-cond}, $ Z $ has full row rank, so its pseudoinverse $ Z^+ $ satisfies $ Z^+ = Z^\top (Z Z^\top)^{-1} $ and hence $ Z Z^+ = I $.
Define
\[
	Q := Z^+ \begin{bmatrix} K \\ I \end{bmatrix} P.
\]
Then
\[
	\begin{bmatrix} U_0 \\ X_0 \end{bmatrix} Q
	= Z Z^+ \begin{bmatrix} K \\ I \end{bmatrix} P
	= \begin{bmatrix} K \\ I \end{bmatrix} P,
\]
which implies \eqref{eq:sdp-unquant-q} and
\begin{align*}
	X_1 Q
	& = (A X_0 + B U_0) Q \\
	&= (A + B K) P.
\end{align*}
Thus, \eqref{eq:sdp-unquant-lya} holds with equality.
Finally, choosing
\[
	L := K P K^\top,
	\qquad
	\gamma := \tr(P) + \tr(L)
\]
satisfies \eqref{eq:sdp-unquant-l} and \eqref{eq:sdp-unquant-gamma} with equality. 
Therefore, $ (\gamma, Q, P, L) $ is feasible and satisfies $ K = U_0 Q P^{-1} $.
\qedhere
\end{enumerate}
\end{proof}

When only quantized state measurements are available, we replace $ X_0, X_1 $ in \eqref{eq:sdp-unquant} with the quantized state data matrices $ \hat X_0, \hat X_1 $ defined in \eqref{eq:data-mx-dfn} and consider the following \emph{quantized SDP}:
\begin{subequations}\label{eq:sdp-quant}
\begin{align}
	&\min_{(\gamma, Q, P, L, W)} \gamma \label{eq:sdp-quant-val}\\
	&\text{subject to} \notag\\
	&\hat X_1 Q P^{-1} Q^{\top} \hat X_1^{\top} - P + I \preceq 0, \label{eq:sdp-quant-lya}\\
	&P \succeq I, \label{eq:sdp-quant-p}\\
	&L - U_0 Q P^{-1} Q^{\top} U_0^{\top} \succeq 0, \label{eq:sdp-quant-l}\\
	&\hat X_0 Q = P, \label{eq:sdp-quant-q}\\
	&W - Q P^{-1} Q^\top \succeq 0, \label{eq:sdp-quant-w}\\
	&\tr(P) + \tr(L) + \omega \tr(W) \leq \gamma. \label{eq:sdp-quant-gamma}
\end{align}
\end{subequations}
The additional variable $ W $ and parameter $ \omega > 0 $ introduce a regularization term motivated by robustness to disturbances; see \cite[Sec.~5]{PersisTesi2021}.
Larger values of $ \omega $ encourage solutions with smaller $ \tr(Q P^{-1} Q^\top) $, which can make the sufficient condition in Theorem~\ref{thm:ddc-stab} below easier to satisfy.

The data matrices satisfy
\[
	X_1 = A X_0 + B U_0, 
	\quad 
	\hat X_0 = X_0 + D_0, 
	\quad 
	\hat X_1 = X_1 + D_1.
\]
Hence
\[
	\hat{X}_1 
	= A X_0 + B U_0 + D_1 
	= A \hat{X}_0 + B U_0 + (D_1 - A D_0).
\]
Let
\[
	\Delta_0 := D_1 - A D_0 \in \R^{n \times T},
\]
which captures the discrepancy introduced by the quantized state measurements.
By \eqref{eq:sys-mx-bnd} and \eqref{eq:quant-err-bnd-ddc}, the columns of $ \Delta_0 $ satisfy
\begin{align*}
	\|d(k+1) - A d(k)\| 
	&\leq \|d(k+1)\| + \alpha \|d(k)\| \\
	&\leq \sqrt{n} \|d(k+1)\|_\infty + \alpha \sqrt{n} \|d(k)\|_\infty \\
	&\leq \sqrt{n} (\alpha + 1) \delta
\end{align*}
for all $ k \in \{0, 1, \ldots, T-1\} $. Hence
\begin{equation}\label{eq:quant-err-mx-bnd}
	\|\Delta_0\| 
	\leq \|\Delta_0\|_\rmF 
	\leq \sqrt{n T} (\alpha + 1) \delta 
	=: \hat\delta,
\end{equation}
where $ \hat\delta $ can be made arbitrarily small by decreasing the prescribed quantization error bound $ \delta $, at the expense of requiring more quantization cells.

Our first theorem establishes a verifiable condition under which any feasible solution to the quantized SDP \eqref{eq:sdp-quant} yields a stabilizing controller.
\begin{thm}\label{thm:ddc-stab}
Let $ (\bar\gamma, \bar Q, \bar P, \bar L, \bar W) $ be any feasible solution to \eqref{eq:sdp-quant}, and define
\begin{equation}\label{eq:ddc-stab-gain}
	\bar K := U_0 \bar Q \bar P^{-1},
	\qquad
	\bar M := \bar Q \bar P^{-1} \bar Q^\top.
\end{equation}
Then
\begin{equation}\label{eq:ddc-stab}
	(A + B \bar K) \bar P (A + B \bar K)^\top - \bar P \preceq -\eta_1 I,
\end{equation}
where
\begin{equation}\label{eq:ddc-stab-coef}
	\eta_1 := 1 - \hat\delta^2 \|\bar M\| - 2 \hat\delta \|\hat X_1 \bar M\|.
\end{equation}
If
\begin{equation}\label{eq:ddc-stab-cond}
	\eta_1 > 0,
\end{equation}
then $ A + B \bar K $ is Schur stable.
\end{thm}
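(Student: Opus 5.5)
Following the proof of part 1 of the preceding lemma, the plan is to express $(A+B\bar K)\bar P$ through the quantized data and treat the discrepancy $\Delta_0$ as a perturbation.

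\emph{Step 1: rewrite the closed-loop term.} From \eqref{eq:sdp-quant-q} we have $\hat X_0\bar Q=\bar P$, and by \eqref{eq:ddc-stab-gain}, $U_0\bar Q=\bar K\bar P$. Using $\hat X_1 = A\hat X_0 + BU_0 + \Delta_0$ we obtain
\[
	\hat X_1\bar Q = (A+B\bar K)\bar P + \Delta_0\bar Q .
\]
Hence $(A+B\bar K)\bar P = (\hat X_1-\Delta_0)\bar Q$. Since $\bar P\succ 0$, this gives
\[
	(A+B\bar K)\bar P(A+B\bar K)^\top = (\hat X_1-\Delta_0)\bar M(\hat X_1-\Delta_0)^\top .
\]

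\emph{Step 2: expand and bound the cross terms.} Expanding the right-hand side gives
\[
	\hat X_1\bar M\hat X_1^\top - \hat X_1\bar M\Delta_0^\top - \Delta_0\bar M\hat X_1^\top + \Delta_0\bar M\Delta_0^\top .
\]
The first term is $\preceq \bar P - I$ by \eqref{eq:sdp-quant-lya}. For the last term, $\bar M\succeq 0$ gives
\[
	\Delta_0\bar M\Delta_0^\top \preceq \|\Delta_0\|^2\|\bar M\| I \preceq \hat\delta^2\|\bar M\| I,
\]
using \eqref{eq:quant-err-mx-bnd}. The two cross terms form the symmetric matrix $S := -(\hat X_1\bar M\Delta_0^\top + \Delta_0\bar M\hat X_1^\top)$. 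Its spectral norm satisfies
\[
	\|S\| \le 2\|\hat X_1\bar M\|\,\|\Delta_0\| \le 2\hat\delta\|\hat X_1\bar M\|,
\]
so $S \preceq 2\hat\delta\|\hat X_1\bar M\| I$. Adding the three bounds and subtracting $\bar P$ yields \eqref{eq:ddc-stab} with $\eta_1$ as in \eqref{eq:ddc-stab-coef}.

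\emph{Step 3: conclude stability.} If $\eta_1>0$, then $\bar P\succeq I\succ0$ together with \eqref{eq:ddc-stab} gives a strict discrete Lyapunov inequality for $(A+B\bar K)^\top$. Take any eigenvalue $\lambda$ of $(A+B\bar K)^\top$ with eigenvector $v\neq 0$, so that $(A+B\bar K)^\top v=\lambda v$. Multiplying \eqref{eq:ddc-stab} by $v^*$ on the left and $v$ on the right gives
\[
	(|\lambda|^2-1)\,v^*\bar P v \le -\eta_1\|v\|^2 < 0,
\]
hence $|\lambda|<1$. Since $A+B\bar K$ and its transpose share eigenvalues, $A+B\bar K$ is Schur stable.

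None of these steps is a real obstacle. The only point needing care is the cross term in Step 2: it is indefinite, so it must be bounded through its spectral norm rather than by a completion of squares. A completion of squares would give a different constant from the one stated in \eqref{eq:ddc-stab-coef}.
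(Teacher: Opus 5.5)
Your proof is correct and follows the paper's argument essentially step for step: the same identity $(A+B\bar K)\bar P=(\hat X_1-\Delta_0)\bar Q$, the same expansion around $\hat X_1\bar M\hat X_1^\top$, and the same norm bounds producing $1-\eta_1$. The only cosmetic differences are that you bound the three perturbation pieces separately in the Loewner order rather than bounding $\|\bar\Psi\|$ as a whole, and that you write out the eigenvector argument for Schur stability, which the paper leaves implicit.
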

\begin{proof}
Since $ \bar P = \hat X_0 \bar Q $ and $ \bar K \bar P = U_0 \bar Q $,
\begin{align*}
	(A + B \bar K) \bar P 
	&= (A \hat X_0 + B U_0) \bar Q \\
	&= (\hat X_1 - \Delta_0) \bar Q.
\end{align*}
Then
\begin{align*}
	&\quad\,\, (A + B \bar K) \bar P (A + B \bar K)^\top \\
	&= (\hat X_1 - \Delta_0) \bar Q \bar P^{-1} \bar Q^\top (\hat X_1 - \Delta_0)^\top \\
	&= \hat X_1 \bar M \hat X_1^{\top} + \bar\Psi,
\end{align*}
where
\begin{equation*}%\label{eq:ddc-stab-res}
	\bar\Psi := 
	\Delta_0 \bar M \Delta_0^\top 
	- \hat X_1 \bar M \Delta_0^\top 
	- \Delta_0 \bar M \hat X_1^\top.
\end{equation*}
Combining this with \eqref{eq:sdp-quant-lya} yields
\begin{equation*}
	(A + B \bar K) \bar P (A + B \bar K)^\top - \bar P 
	\preceq \bar\Psi - I.
\end{equation*}
Using \eqref{eq:quant-err-mx-bnd}, we obtain
\begin{align*}
	\|\bar\Psi\| 
	&\leq \|\Delta_0\|^2 \|\bar M\| + 2 \|\Delta_0\| \|\hat X_1 \bar M\| \\
	&\leq \hat\delta^2 \|\bar M\| + 2 \hat\delta \|\hat X_1 \bar M\| \\
	&= 1 - \eta_1.
\end{align*}
Since $ \bar\Psi $ is symmetric,
\[
	\bar\Psi \preceq \|\bar\Psi\| I \preceq (1 - \eta_1) I,
\]
and hence \eqref{eq:ddc-stab} holds.
If $ \eta_1 > 0 $, then \eqref{eq:ddc-stab} implies
\begin{equation*}%\label{eq:ddc-stab-lya}
	(A + B \bar K) \bar P (A + B \bar K)^\top - \bar P \prec 0,
\end{equation*}
where $ \bar P \succ 0 $. Therefore, $ A + B \bar K $ is Schur stable.
\end{proof}

In practice, the feedback gain $ \bar K $ can be computed by solving \eqref{eq:sdp-quant} using the available input and quantized state data.
The sufficient condition \eqref{eq:ddc-stab-cond} can also be verified directly from these data.

Our second theorem guarantees that a stabilizing controller can be obtained from any optimal solution when the quantization errors are sufficiently small.

\begin{thm}\label{thm:ddc-feas}
Suppose that the LTI system \eqref{eq:sys} is controllable and the input sequence $ \{u(k)\}_{k=0}^{T-1} $ is PE of order $ n + 1 $.
There exists $ \delta^* > 0 $ such that, if the quantization error bound $ \delta < \delta^* $, then the rank condition \eqref{eq:rank-cond-quant} holds, the quantized SDP \eqref{eq:sdp-quant} is feasible, and every optimal solution to \eqref{eq:sdp-quant} yields a stabilizing controller.
\end{thm}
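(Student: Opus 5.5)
The plan has three parts: establish the quantized rank condition by perturbation, build a feasible point from a fixed stabilizing gain, and then show that every optimal solution satisfies the condition \eqref{eq:ddc-stab-cond} of Theorem~\ref{thm:ddc-stab}.

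\textbf{Rank condition.} By Lemma~\ref{lem:fund-lem}, the matrix $Z := \begin{bmatrix} U_0 \\ X_0\end{bmatrix}$ has full row rank, so $\sigma_0 := \SVMin(Z) > 0$. The quantized matrix is $\hat Z = Z + \begin{bmatrix} 0 \\ D_0\end{bmatrix}$ with $\|D_0\| \le \|D_0\|_\rmF \le \sqrt{nT}\,\delta$. Weyl's inequality for singular values then gives $\SVMin(\hat Z) \ge \sigma_0 - \sqrt{nT}\,\delta$. Hence \eqref{eq:rank-cond-quant} holds once $\delta < \sigma_0/(2\sqrt{nT})$, and in that case $\SVMin(\hat Z) \ge \sigma_0/2$.

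\textbf{Feasibility.} Controllability gives a gain $K$ with $A+BK$ Schur stable; let $P_0 \succeq I$ solve $(A+BK)P_0(A+BK)^\top - P_0 + 2I = 0$, where the factor $2$ reserves a margin. Mimic part~2 of the preceding lemma with $\hat Z$: set
\[
	Q := \hat Z^+ \begin{bmatrix} K \\ I\end{bmatrix} P_0 .
\]
Then $\hat X_0 Q = P_0$ and $U_0 Q = K P_0$. Using $\hat X_1 = A\hat X_0 + BU_0 + \Delta_0$, we get $\hat X_1 Q = (A+BK)P_0 + \Delta_0 Q$. The quantity $\|Q\|$ is bounded uniformly in $\delta$ because $\|\hat Z^+\| \le 2/\sigma_0$, and $\|\Delta_0\| \le \hat\delta$. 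Therefore the left-hand side of \eqref{eq:sdp-quant-lya} equals $-I + O(\hat\delta)$, which is $\preceq 0$ for small $\delta$. Choosing $L$ and $W$ with equality and $\gamma$ as the resulting trace yields a feasible point whose cost $\gamma_0$ is bounded by a constant $\Gamma$ independent of $\delta$ for all $\delta$ below the threshold. Here I rely on $\|\hat X_1\|$ being bounded, which follows from the input bound, the fixed $T$, and \eqref{eq:quant-bnd-ddc}.

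\textbf{Optimal solutions are stabilizing.} Any optimal solution has $\bar\gamma \le \Gamma$, so $\omega\tr(\bar W) \le \Gamma$ and hence $\|\bar M\| \le \tr(\bar M) \le \tr(\bar W) \le \Gamma/\omega$. This step is where the regularization term $\omega\tr(W)$ is essential. It also gives $\|\hat X_1\bar M\| \le \|\hat X_1\|\,\Gamma/\omega$, with $\|\hat X_1\|$ bounded uniformly in $\delta$. Substituting into \eqref{eq:ddc-stab-coef},
\[
	\eta_1 \ge 1 - \hat\delta^2\Gamma/\omega - 2\hat\delta\|\hat X_1\|\Gamma/\omega ,
\]
which is positive for $\hat\delta$ small. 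Since $\hat\delta = \sqrt{nT}(\alpha+1)\delta$, choosing $\delta^*$ as the minimum of the three thresholds and applying Theorem~\ref{thm:ddc-stab} completes the argument. (Attainment of the optimum is not needed, since the bound applies to any feasible point with cost at most $\gamma_0$.)

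\textbf{Main obstacle.} The delicate step is the uniformity: the constants $\Gamma$ and $\sup\|\hat X_1\|$ must not depend on $\delta$. Two things can change as $\delta$ varies. First, the quantized data themselves vary with $\delta$ while the true trajectory $X_0, X_1$ is fixed for a given input and $x_0$. Second, the bounds $E_k$ grow as $\delta \to 0$ only through the ceiling in $N_k$, which affects the number of cells but not the trajectory. With the trajectory fixed, $\|\hat X_i - X_i\| \le \sqrt{nT}\,\delta$, so all quantities are uniformly bounded for $\delta \le 1$. That resolves the issue.
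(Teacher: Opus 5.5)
Your proof is correct. The first two steps are essentially the paper's, and the genuine difference is the last step.

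\textbf{First two steps.} The rank step is the same Weyl-inequality argument. You use $\SVMin(Z)>0$ directly, which is legitimate because the open-loop trajectory, and hence $Z$, does not depend on $\delta$. The paper instead invokes the quantitative bound of Coulson et al.\ to express $\delta_0$ through $\SVMin(\scrU_{0,n+1,T-n})$. Your margin $2I$ in the Lyapunov equation is the paper's rescaling by $1/\eta_2$ with $\eta_2=1/2$.

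\textbf{Last step.} You pass through the verifiable condition \eqref{eq:ddc-stab-cond}, bounding $\|\bar M\|\le\tr(\bar W)\le\Gamma/\omega$ and $\|\hat X_1\|\le\|X_1\|+\sqrt{nT}\,\delta$. This is shorter, and it in fact yields the stronger conclusion of Corollary~\ref{cor:ddc-feas} directly.

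The paper never bounds $\|\hat X_1\|$. It uses $(\hat X_1-\Delta_0)\bar M=(A+B\bar K)\bar Q^\top$, drops the term $-\Delta_0\bar M\Delta_0^\top\preceq0$ from $\bar\Psi$, and bounds $\|\bar K\|$ and $\|\bar Q\|$ from the optimal cost. The bound on $\|\bar Q\|$ comes from an AM--GM step pairing $\tr(\bar P)$ with $\omega\tr(\bar M)$.

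That route buys two things. First, it gives explicit thresholds $\delta_1,\delta_2$ that depend only on $A$, $B$, the reference pair $(K_*,P_*)$, $\omega$, $\eta_2$ and the excitation level $\delta_0$, not on the size $\|X_1\|$ of the state trajectory. Second, it gives a less conservative threshold $\delta_2$ for Schur stability alone than the threshold $\bar\delta_2$ needed for \eqref{eq:ddc-stab-cond}. Your thresholds are not made explicit and depend on $\SVMin(Z)$ and $\|X_1\|$, which is fine for the existence claim.

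\textbf{Minor points.} $E_k$ stays bounded as $\delta\to0$; only $N_k$ grows. Also, $\|\hat X_1\|$ is not needed in your feasibility step, where the perturbation is $(A+BK)Q^\top\Delta_0^\top+\Delta_0Q(A+BK)^\top+\Delta_0QP_0^{-1}Q^\top\Delta_0^\top$.
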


Theorem~\ref{thm:ddc-feas} is restricted to optimal solutions to the quantized SDP \eqref{eq:sdp-quant}, in contrast to Theorem~\ref{thm:ddc-stab}, which applies to all feasible solutions.
This restriction arises because the proof relies on comparing the optimal value of \eqref{eq:sdp-quant} with the objective value of a specifically constructed feasible solution.
The full proof is presented in Sections~\ref{sssec:ddc-feas-rank}--\ref{sssec:ddc-feas-stab}, including an explicit sufficient bound $ \delta^* $ in \eqref{eq:ddc-feas-cond}.

We also establish a stronger result under an alternative, more conservative bound.

\begin{cor}\label{cor:ddc-feas}
Under the assumptions of Theorem~\ref{thm:ddc-feas}, there exists $ \bar\delta^* > 0 $ such that, if $ \delta < \bar\delta^* $, then the rank condition \eqref{eq:rank-cond-quant} holds, the quantized SDP \eqref{eq:sdp-quant} is feasible, and every optimal solution to \eqref{eq:sdp-quant} satisfies the verifiable condition \eqref{eq:ddc-stab-cond}.
\end{cor}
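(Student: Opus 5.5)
We prove the corollary along the same lines as Theorem~\ref{thm:ddc-feas}, tracking quantities so that the verifiable quantity $\eta_1$ in \eqref{eq:ddc-stab-coef} is forced to be positive.

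\emph{Rank condition.} By Lemma~\ref{lem:fund-lem}, $Z := \begin{bmatrix} U_0 \\ X_0 \end{bmatrix}$ has full row rank, so $\SVMin(Z) > 0$. Write $\hat Z := \begin{bmatrix} U_0 \\ \hat X_0 \end{bmatrix} = Z + \begin{bmatrix} 0 \\ D_0 \end{bmatrix}$. Since $\|D_0\| \leq \sqrt{nT}\,\delta$, Weyl's inequality for singular values gives
\[
\SVMin(\hat Z) \geq \SVMin(Z) - \sqrt{nT}\,\delta .
\]
Hence \eqref{eq:rank-cond-quant} holds once $\sqrt{nT}\,\delta < \SVMin(Z)/2$, and then $\SVMin(\hat Z) \geq \SVMin(Z)/2$.

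\emph{Feasibility with a $\delta$-uniform objective bound.} Fix a gain $K_0$ with $A + BK_0$ Schur, which exists by controllability. Take $P_0 \succeq 2I$ solving
\[
(A+BK_0)P_0(A+BK_0)^\top - P_0 + 2I = 0,
\]
and set
\[
\hat Q := \hat Z^+ \begin{bmatrix} K_0 \\ I \end{bmatrix} P_0 .
\]
As in part 2 of the preceding lemma, $\hat X_0 \hat Q = P_0$ and $U_0 \hat Q = K_0 P_0$. Moreover,
\[
\hat X_1 \hat Q = (A+BK_0)P_0 + \Delta_0 \hat Q .
\]
Expanding \eqref{eq:sdp-quant-lya} gives the nominal margin $-I$ plus cross terms bounded by $c_1\hat\delta + c_2\hat\delta^2$. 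Here $c_1, c_2$ depend only on $\|P_0\|$, $\|K_0\|$, $\alpha$ and the bound $\|\hat Z^+\| \leq 2/\SVMin(Z)$. So for small $\delta$, the choice $(\hat Q, P_0, L_0, W_0)$ is feasible, with $L_0 := K_0 P_0 K_0^\top$, $W_0 := \hat Q P_0^{-1} \hat Q^\top$ and $\gamma_0$ equal to the trace sum. This gives a feasibility threshold, and every optimal solution satisfies $\bar\gamma \leq \gamma_0 \leq \Gamma$ for a constant $\Gamma$ independent of $\delta$. Indeed, $\|W_0\| \leq \|\hat Z^+\|^2 \|[K_0;I]\|^2 \|P_0\|$ is bounded uniformly.

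\emph{Bounding $\eta_1$.} For any optimal solution, \eqref{eq:sdp-quant-w} and \eqref{eq:sdp-quant-gamma} give
\[
\|\bar M\| \leq \tr(\bar W) \leq \Gamma/\omega .
\]
The key remaining term is $\|\hat X_1 \bar M\|$. We write $\hat X_1 \bar M = (\hat X_1 \bar Q \bar P^{-1/2})(\bar P^{-1/2}\bar Q^\top)$. From \eqref{eq:sdp-quant-lya}, $\hat X_1 \bar M \hat X_1^\top \preceq \bar P$, so $\|\hat X_1 \bar Q \bar P^{-1/2}\|^2 \leq \|\bar P\| \leq \tr(\bar P) \leq \Gamma$. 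Also $\|\bar P^{-1/2}\bar Q^\top\|^2 = \|\bar M\| \leq \Gamma/\omega$. Therefore
\[
\|\hat X_1 \bar M\| \leq \Gamma/\sqrt{\omega}, \qquad \eta_1 \geq 1 - \hat\delta^2 \Gamma/\omega - 2\hat\delta\,\Gamma/\sqrt{\omega} .
\]
This is positive whenever $\hat\delta < \sqrt{\omega}\,(\sqrt{1+1/\Gamma} - 1)$, since $\eta_1 \geq 1 - \Gamma\bigl[(\hat\delta/\sqrt{\omega} + 1)^2 - 1\bigr]$.

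\emph{Threshold.} Let $\bar\delta^*$ be the minimum of the rank, feasibility and $\eta_1$ thresholds, each converted via $\hat\delta = \sqrt{nT}(\alpha+1)\delta$.

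The main obstacle is making $\Gamma$ genuinely independent of $\delta$. This requires a uniform bound on $\|\hat Z^+\|$ (supplied by the Weyl step) and a feasibility margin that holds uniformly. Choosing the Lyapunov margin $2I$ in place of $I$ handles the latter.
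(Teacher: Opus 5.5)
Your proof is correct. Its skeleton matches the paper's:
\begin{itemize}
\item Weyl's inequality gives \eqref{eq:rank-cond-quant}.
\item The pseudoinverse candidate $\hat Z^+[K_0;I]P_0$ with an enlarged Lyapunov margin gives feasibility. Your margin $2I$ is exactly the paper's $1/\eta_2$ rescaling with $\eta_2=1/2$. Your cross-term estimate via $\|A+BK_0\|\,\|\hat Q\|$ is the paper's bound through $[B\ A]\,\Xi\,(\hat Z^+)^\top$.
\item Comparing the optimal value with this candidate gives a $\delta$-uniform bound $\bar\gamma\le\Gamma$, which plays the role of the paper's $\chi/\eta_2$.
\end{itemize}

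You depart from the paper in two places. In the rank step you use only $\SVMin(Z)>0$ from Lemma~\ref{lem:fund-lem}. The paper instead invokes the quantitative PE estimate of \cite{CoulsonWaardeLygerosDorfler2023} to express $\delta_0$ through the smallest singular value of the input Hankel matrix. For an existence statement nothing is lost, since both thresholds depend on unknown quantities.

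The more substantive difference is how you bound $\|\hat X_1\bar M\|$. The paper splits off $\Delta_0\bar M$ and rewrites $(\hat X_1-\Delta_0)\bar M=(A+B\bar K)\bar Q^\top$. It then needs the optimality bounds on $\|\bar K\|$ and $\|\bar Q\|$ together with $\|[B\ A]\|\le\sqrt{\alpha^2+\beta^2}$. It ends with a linear term of order $\hat\delta\sqrt{\alpha^2+\beta^2}\,(\chi/\eta_2)^{3/2}/\sqrt{\omega}$ plus $3\hat\delta^2\|\bar M\|$.

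You instead factor $\hat X_1\bar M=(\hat X_1\bar Q\bar P^{-1/2})(\bar P^{-1/2}\bar Q^\top)$ and read $\hat X_1\bar M\hat X_1^\top\preceq\bar P-I$ off \eqref{eq:sdp-quant-lya}. This uses only the SDP constraints and the objective bound, and never touches $A$, $B$ or $\bar K$. It gives $\|\hat X_1\bar M\|\le\sqrt{\|\bar P\|\,\|\bar M\|}\le\Gamma/\sqrt{\omega}$, and in fact $\Gamma/(2\sqrt{\omega})$ by AM--GM, since $\tr(\bar P)+\omega\tr(\bar W)\le\bar\gamma$. Your route is shorter, saves roughly a factor $\sqrt{\alpha^2+\beta^2}\sqrt{\Gamma}$ in the linear term, and yields the clean threshold $\hat\delta<\sqrt{\omega}(\sqrt{1+1/\Gamma}-1)$.

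What the paper's route buys in return is an explicit $\bar\delta^*$ in terms of $\alpha$, $\beta$, $\omega$, $\chi$ and a free parameter $\eta_2$ that can be optimized. It reuses estimates already proved for Theorem~\ref{thm:ddc-feas}, and the result is directly comparable with $\delta_2$.

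Two cosmetic slips:
\begin{itemize}
\item $c_1$ also depends on $\beta$, through $\|A+BK_0\|$.
\item Bounding $\gamma_0$ needs $\tr(W_0)$ rather than $\|W_0\|$. This follows from $\rank W_0\le n$, or from $\tr(W_0)\le\tr(\Xi)\|\hat Z^+\|^2$.
\end{itemize}
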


The proof of Corollary~\ref{cor:ddc-feas} builds on the proof of Theorem~\ref{thm:ddc-feas} and is completed in Section~\ref{sssec:ddc-feas-stab-cond}, including an explicit formula for $ \bar\delta^* $ in \eqref{eq:ddc-feas-cond-alt}.

Both bounds $ \delta^* $ and $ \bar\delta^* $ depend on the unknown matrices $ A $ and $ B $ and therefore cannot be verified directly from the available data.
Nevertheless, they provide an explicit guarantee that, under sufficiently fine quantization, a stabilizing controller can always be obtained, and the verifiable condition \eqref{eq:ddc-stab-cond} can always be satisfied.

%%%%%%%%%%%%%%%%%%%%%%%%%%%%%%%%%%%%%%%%
\subsection{Proof of Theorem~\ref{thm:ddc-feas}}\label{ssec:ddc-feas-pf}
The proof consists of three steps. We first establish the rank condition \eqref{eq:rank-cond-quant}, then feasibility of the quantized SDP \eqref{eq:sdp-quant}, and finally that every optimal solution to \eqref{eq:sdp-quant} yields a stabilizing controller.

%%%%%%%%%%%%%%%%%%%%
\subsubsection{Error bound for rank condition}\label{sssec:ddc-feas-rank}
Let
\[
	\hat Z = 
	\begin{bmatrix} 
		U_0 \\ \hat X_0 
	\end{bmatrix}, 
	\qquad 
	Z = 
	\begin{bmatrix} 
		U_0 \\ X_0 
	\end{bmatrix}.
\]
In the first step, we derive a sufficient condition for \eqref{eq:rank-cond-quant} to hold, or equivalently, for $ \SVMin(\hat Z) > 0 $. 

Since
\[
	\hat Z = Z + \begin{bmatrix} 0 \\ D_0 \end{bmatrix},
\]
Weyl's inequality for singular values \cite[Th.~3.3.16, p.~178]{HornJohnson1991} implies
\[
	\SVMin(\hat Z) \geq \SVMin(Z) - \|D_0\|.
\]
%----
%
We first bound $ \SVMin(Z) $.
Since the input sequence $ \{u(k)\}_{k=0}^{T-1} $ is PE of order $ n + 1 $, the Hankel matrix $ \scrU_{0, n+1, T-n} $ defined in Definition~\ref{dfn:pe} has full row rank $ m (n + 1) $, so $ \SVMin(\scrU_{0, n+1, T-n}) > 0 $.
Moreover, since the pair $ (A, B) $ is controllable, it follows from the proof of \cite[Th.~3]{CoulsonWaardeLygerosDorfler2023} that
\[
	\SVMin(Z) \geq 
	\frac{\rho_0}{\sqrt{n+1}} \SVMin(\scrU_{0, n+1, T-n}),
\]
where $ \rho_0 > 0 $ is a constant depending on $ A $ and $ B $.
We next bound $ \|D_0\| $. 
Using \eqref{eq:quant-err-bnd-ddc}, we obtain
\begin{align*}
	\|D_0\| 
	\leq \|D_0\|_\rmF 
	\leq \sqrt{\sum_{k=0}^{T-1} n \|d(k)\|_\infty^2} 
	\leq \sqrt{nT} \delta.
\end{align*}

Combining these bounds yields
\[
	\SVMin(\hat Z) 
	\geq \frac{\rho_0}{\sqrt{n+1}} \SVMin(\scrU_{0, n+1, T-n}) - \sqrt{nT} \delta.
\]
Therefore, if
\begin{equation}\label{eq:ddc-feas-cond-0}
	\delta 
	< \frac{\rho_0}{\sqrt{(n+1) n T}} \SVMin(\scrU_{0, n+1, T-n})
	=: \delta_0,
\end{equation}
then
\begin{equation}\label{eq:ddc-feas-cond-0-sv}
	\SVMin(\hat Z) \geq \sqrt{n T} (\delta_0 - \delta) > 0,
\end{equation}
and hence the rank condition \eqref{eq:rank-cond-quant} holds.

%%%%%%%%%%%%%%%%%%%%
\subsubsection{Error bound for feasibility of quantized SDP}\label{sssec:ddc-feas-feas}
In the second step, we derive a sufficient condition under which a feasible solution to \eqref{eq:sdp-quant} can be constructed.

Suppose for now that \eqref{eq:ddc-feas-cond-0} holds.
Since $ (A, B) $ is controllable, there exists a feedback gain $ K_* $ such that $ A + B K_* $ is Schur stable.
Let $ P_* \succ 0 $ be the unique solution to the discrete-time Lyapunov equation
\[
	(A + B K_*) P_* (A + B K_*)^\top - P_* + I = 0,
\]
and define
\begin{align*}
	Q_* 
	&:= \hat Z^+ \begin{bmatrix} K_* \\ I \end{bmatrix} P_*,
	&L_* 
	&:= K_* P_* K_*^\top, \\
	\gamma_* 
	&:= \tr(P_*) + \tr(L_*),
	&W_* 
	&:= Q_* P_*^{-1} Q_*^\top.
\end{align*}
Under \eqref{eq:rank-cond-quant}, $ \hat Z $ has full row rank, so its pseudoinverse $ \hat Z^+ $ satisfies $ \hat Z \hat Z^+ = I $. 
Let $ \eta_2 \in (0, 1) $ and consider the tuple
\begin{equation}\label{eq:ddc-feas-feas-tuple}
	\left( \frac{\gamma_* + \omega \tr(W_*)}{\eta_2}, 
	\frac{Q_*}{\eta_2}, 
	\frac{P_*}{\eta_2}, 
	\frac{L_*}{\eta_2}, 
	\frac{W_*}{\eta_2} 
	\right).
\end{equation} 
By construction, this tuple satisfies all constraints in \eqref{eq:sdp-quant} except possibly \eqref{eq:sdp-quant-lya}.

Using
\begin{align*}
	(\hat X_1 - \Delta_0) Q_*
	&= (A \hat X_0 + B U_0) Q_* \\ 
	&= \begin{bmatrix} B & A \end{bmatrix} \hat Z \hat Z^+
	\begin{bmatrix} K_* \\ I \end{bmatrix} P_* \\
	&= (A + B K_*) P_*,
\end{align*}
we obtain
\begin{align*}
	&\quad\,\ \hat X_1 Q_* P_*^{-1} Q_*^{\top} \hat X_1^\top \\
	&= (\hat X_1 - \Delta_0) Q_* P_*^{-1} Q_*^{\top} (\hat X_1 - \Delta_0)^{\top} - \Psi_* \\
	&= (A + B K_*) P_* (A + B K_*)^\top - \Psi_* \\
	&= P_* - I - \Psi_*,
\end{align*}
where
\[
	\Psi_* := 
	\Delta_0 W_* \Delta_0^\top 
	- \hat X_1 W_* \Delta_0^\top 
	- \Delta_0 W_* \hat X_1^\top.
\]
Hence
\[
	\hat X_1 
	\left( \frac{Q_*}{\eta_2} \right) 
	\left( \frac{P_*}{\eta_2} \right)^{-1} 
	\left( \frac{Q_*}{\eta_2} \right)^{\top} 
	\hat X_1^\top 
	- \left( \frac{P_*}{\eta_2} \right) + I 
	= \frac{(\eta_2 - 1) I - \Psi_*}{\eta_2}.
\]
Therefore, \eqref{eq:sdp-quant-lya} holds if
\[
	 -\Psi_* \preceq (1 - \eta_2) I.
\]

To derive a sufficient condition in terms of $ \delta $, rewrite
\begin{align*}
	-\Psi_*
	&= 
	-\Delta_0 W_* \Delta_0^\top 
	+ \hat X_1 W_* \Delta_0^\top 
	+ \Delta_0 W_* \hat X_1^\top \\
	&= 
	\Delta_0 W_* \Delta_0^\top 
	+ (\hat X_1 - \Delta_0) W_* \Delta_0^\top 
	+ \Delta_0 W_* (\hat X_1 - \Delta_0)^\top.
\end{align*}
Define 
\begin{equation*}
	\Xi := \begin{bmatrix} K_* \\ I \end{bmatrix} P_* \begin{bmatrix} K_* \\ I \end{bmatrix}^\top.
\end{equation*}
Then
\[
	W_* = \hat Z^+ \Xi (\hat Z^+)^\top,
\]
and
\begin{align*}
	(\hat X_1 - \Delta_0) W_* 
	&= (A \hat X_0 + B U_0) W_* \\
	&= \begin{bmatrix} B & A \end{bmatrix} \hat Z \hat Z^+ \Xi (\hat Z^+)^\top \\
	&= \begin{bmatrix} B & A \end{bmatrix} \Xi (\hat Z^+)^\top.
\end{align*}
Moreover, the derivation of \eqref{eq:quant-err-mx-bnd} also yields
\begin{equation}\label{eq:quant-err-mx-bnd-A}
	\|\Delta_0\| \leq \sqrt{n T} (\|A\| + 1) \delta,
\end{equation}
and \eqref{eq:ddc-feas-cond-0-sv} implies
\begin{equation}\label{eq:data-mx-bnd}
	\|\hat Z^+\| 
	= \frac{1}{\SVMin(\hat Z)} 
	\leq \frac{1}{\sqrt{n T} (\delta_0 - \delta)}.
\end{equation}
Combining these bounds and letting
\[
	\overline\sigma_{AB} := \left\| \begin{bmatrix} B & A \end{bmatrix} \right\|,
\]
we obtain
\begin{align*}
	\|{-\Psi_*}\|
	&\leq \|\Xi\| \|\hat Z^+\|^2  \|\Delta_0\|^2
	+ 2 \left\| \begin{bmatrix} B & A \end{bmatrix} \right\| 
	\|\Xi\| \|\hat Z^+\| \|\Delta_0\| \\
	&\leq \|\Xi\| \left( \frac{(\|A\| + 1) \delta}{\delta_0 - \delta} \right)^2 
	+ 2 \overline\sigma_{AB} \|\Xi\| \frac{(\|A\| + 1) \delta}{\delta_0 - \delta} \\
	&= \|\Xi\| \left( \frac{(\|A\| + 1) \delta}{\delta_0 - \delta} + \overline\sigma_{AB} \right)^2 
	- \overline\sigma_{AB}^2 \|\Xi\|.
\end{align*}

Requiring the above expression to be no greater than $ 1 - \eta_2 $ and solving for $ \delta $ yields the following sufficient condition.
If
\begin{equation}\label{eq:ddc-feas-cond-1}
	\delta \leq 
	\frac{\delta_0}
	{1 + \frac{\|A\| + 1}
		{\sqrt{\overline\sigma_{AB}^2 + \frac{1 - \eta_2}{\|\Xi\|}} 
		- \overline\sigma_{AB}}
	}
	=: \delta_1,
\end{equation}
then \eqref{eq:ddc-feas-cond-0} also holds, and $ \|{-\Psi_*}\| \leq 1 - \eta_2 $.
Since $ -\Psi_* $ is symmetric,
\[
	-\Psi_*
	\preceq \|{-\Psi_*}\| I
	\preceq (1 - \eta_2) I.
\]
Consequently, the tuple \eqref{eq:ddc-feas-feas-tuple} satisfies all constraints in \eqref{eq:sdp-quant}.
Therefore, the quantized SDP \eqref{eq:sdp-quant} is feasible.

%%%%%%%%%%%%%%%%%%%%
\subsubsection{Error bound for stabilizing controllers from optimal solutions}\label{sssec:ddc-feas-stab}
Let $ (\bar\gamma, \bar Q, \bar P, \bar L, \bar W) $ be an arbitrary optimal solution to \eqref{eq:sdp-quant}, and define 
$ \bar K $
and
$ \bar M $
as in \eqref{eq:ddc-stab-gain}.
In the third step, we derive a sufficient condition, independent of the particular optimal solution, under which $ A + B \bar K $ is Schur stable.

Suppose for now that \eqref{eq:ddc-feas-cond-1} holds and the feasible solution \eqref{eq:ddc-feas-feas-tuple} to \eqref{eq:sdp-quant} is constructed as in the previous step.
Arguments from the proof of Theorem~\ref{thm:ddc-stab} yield
\begin{align*}
	A + B \bar K 
	= (\hat X_1 - \Delta_0) \bar Q \bar P^{-1},
\end{align*}
and
\[
	(A + B \bar K) \bar P (A + B \bar K)^\top - \bar P 
	\preceq \bar\Psi - I,
\]
where
\[
	\bar\Psi := 
	\Delta_0 \bar M \Delta_0^\top 
	- \hat X_1 \bar M \Delta_0^\top 
	- \Delta_0 \bar M \hat X_1^\top.
\]
Thus, $ A + B \bar K $ is Schur stable if $ \bar\Psi \prec I $.

To derive a sufficient condition in terms of $ \delta $, rewrite
\begin{align*}
	\bar\Psi 
	&= 
	-\Delta_0 \bar M \Delta_0^\top 
	- (\hat X_1 - \Delta_0) \bar M \Delta_0^\top 
	- \Delta_0 \bar M (\hat X_1 - \Delta_0)^\top \\
	&= 
	-\Delta_0 \bar M \Delta_0^\top 
	- (A + B \bar K) \bar Q^\top \Delta_0^\top 
	- \Delta_0 \bar Q (A + B \bar K)^\top \\
	&\preceq 
	-(A + B \bar K) \bar Q^\top \Delta_0^\top 
	- \Delta_0 \bar Q (A + B \bar K)^\top.
\end{align*}
We next use optimality to obtain bounds on $ \bar K $ and $ \bar Q $.
Since \eqref{eq:ddc-feas-feas-tuple} is feasible,
\[
	\bar\gamma 
	\leq 
	\frac{\gamma_* + \omega \tr(W_*)}{\eta_2}.
\]
Using $ W_* = \hat Z^+ \Xi (\hat Z^+)^\top $, $ \Xi \succeq 0 $, and \eqref{eq:data-mx-bnd}, we obtain
\begin{align*}
	\tr(W_*)
	&= 
	\tr \left( \Xi (\hat Z^+)^\top \hat Z^+ \right) \\
	&\leq 
	\tr(\Xi) \left\| (\hat Z^+)^\top \hat Z^+ \right\| \\ % Ruhe's trace inequality \cite[Th.~4.3.53, p.~255]{HornJohnson2013}
	&= 
	\tr(\Xi) \|\hat Z^+\|^2 \\
	&\leq
	\frac{\tr(\Xi)}{n T (\delta_0 - \delta)^2}.
\end{align*}
Define
\begin{equation}\label{eq:chi}
	\chi := \gamma_* + \frac{\omega \tr(\Xi)}{n T (\delta_0 - \delta_1)^2}.
\end{equation}
Since $ \delta \leq \delta_1 $,
\[
	\bar\gamma 
	\leq 
	\frac{\gamma_* + \omega \tr(W_*)}{\eta_2} 
	\leq 
	\frac{\chi}{\eta_2}.
\]
By \eqref{eq:sdp-quant-l}, \eqref{eq:sdp-quant-w}, and
\eqref{eq:sdp-quant-gamma},
\[
	\tr(\bar P) 
	+ \tr(\bar K \bar P \bar K^\top) 
	+ \omega \tr(\bar Q \bar P^{-1} \bar Q^\top) 
	\leq 
	\bar\gamma.
\]
Since $ \bar P \succeq I $, we have
\begin{align*}
	\tr(\bar P) + \tr(\bar K \bar P \bar K^\top)
	&= \tr(\bar P) + \tr(\bar P \bar K^\top \bar K) \\ % Ruhe's trace inequality \cite[Th.~4.3.53, p.~255]{HornJohnson2013}
	&\geq n + \SVMin(\bar P) \tr(\bar K^\top \bar K) \\
	&= n + \SVMin(\bar P) \|\bar K\|_\rmF^2 \\
	&\geq 1 + \|\bar K\|^2,
\end{align*}
and 
\begin{align*}
	\tr(\bar Q \bar P^{-1} \bar Q^\top)
	&= \tr(\bar P^{-1} \bar Q^\top \bar Q) \\
	&\geq \SVMin(\bar P^{-1}) \tr(\bar Q^\top \bar Q) \\
	&= \frac{\|\bar Q\|_\rmF^2}{\|\bar P\|} \\
	&\geq \frac{\|\bar Q\|^2}{\tr(\bar P)}.
\end{align*}
Therefore,
\begin{align*}
	\|\bar K\|^2 + 1
	&\leq \tr(\bar P) + \tr(\bar K \bar P \bar K^\top)
	\leq \frac{\chi}{\eta_2},
\end{align*}
and
\begin{align*}
	\|\bar Q\|
	&\leq \sqrt{\tr(\bar P) \tr(\bar Q \bar P^{-1} \bar Q^\top)} \\
	&\leq \frac{\tr(\bar P) + \omega \tr(\bar Q \bar P^{-1} \bar Q^\top)}{2 \sqrt{\omega}} \\
	&\leq \frac{\chi}{2 \sqrt{\omega} \eta_2}.
\end{align*}
Combining these bounds and using
\[
	\left\| \begin{bmatrix} \bar K \\ I \end{bmatrix} \right\|
	\leq
	\sqrt{\|\bar K\|^2 + 1}
\]
and \eqref{eq:quant-err-mx-bnd-A}, we obtain
\begin{align*}
	&\quad\,\, 
	\|(A + B \bar K) \bar Q^\top \Delta_0^\top 
	+ \Delta_0 \bar Q (A + B \bar K)^\top\| \\
	&\leq 
	2 \left\| \begin{bmatrix} B & A \end{bmatrix} \right\|
	\left\| \begin{bmatrix} \bar K \\ I \end{bmatrix} \right\| 
	\|\bar Q\| \|\Delta_0\| \\
	&\leq 
	2 \overline\sigma_{AB} \|\Delta_0\| \|\bar Q\| 
	\sqrt{\|\bar K\|^2 + 1} \\
	&\leq \sqrt{n T} (\|A\| + 1) 
	\overline\sigma_{AB} 
	\sqrt{\frac{\chi^3}{\omega \eta_2^3}}\, \delta.
\end{align*}

Requiring the above expression to be smaller than $ 1 $ and solving for $ \delta $ yields the following sufficient condition, which is independent of the particular optimal solution.
If \eqref{eq:ddc-feas-cond-1} holds and
\begin{equation}\label{eq:ddc-feas-cond-2}
	\delta 
	< \frac{1}{\sqrt{n T} (\|A\| + 1) \overline\sigma_{AB}} 
	\sqrt{\frac{\omega \eta_2^3}{\chi^3}}
	=: \delta_2,
\end{equation}
then 
\[
	\|(A + B \bar K) \bar Q^\top \Delta_0^\top 
	+ \Delta_0 \bar Q (A + B \bar K)^\top\| 
	< 1.
\]
Since the matrix inside the norm is symmetric,
\[
	-(A + B \bar K) \bar Q^\top \Delta_0^\top 
	- \Delta_0 \bar Q (A + B \bar K)^\top 
	\prec I.
\]
Consequently, $ \bar\Psi \prec I $.
Therefore, $ A + B \bar K $ is Schur stable.

Finally, if
\begin{equation}\label{eq:ddc-feas-cond}
	\delta < \min\{\delta_1,\, \delta_2\} =: \delta^*,
\end{equation}
where $ \delta_1, \delta_2 > 0 $ are defined in \eqref{eq:ddc-feas-cond-1} and \eqref{eq:ddc-feas-cond-2}, respectively, then the conclusions of Theorem~\ref{thm:ddc-feas} hold.
This completes the proof.

Since $ \eta_2 \in (0, 1) $ is arbitrary, the sufficient bound on $ \delta $ can be tightened by taking the supremum of $ \delta^* $ over $ \eta_2 \in (0, 1) $.

%%%%%%%%%%%%%%%%%%%%
\subsubsection{Error bound for verifiable stability condition}\label{sssec:ddc-feas-stab-cond}
In this subsection, we derive a more conservative bound on $ \delta $ that guarantees the verifiable condition \eqref{eq:ddc-stab-cond} for every optimal solution to \eqref{eq:sdp-quant}.

Let $ (\bar\gamma, \bar Q, \bar P, \bar L, \bar W) $ be an arbitrary optimal solution to \eqref{eq:sdp-quant}, and define $ \bar K, \bar M $ as in \eqref{eq:ddc-stab-gain} and $ \chi $ as in \eqref{eq:chi}.
Using 
\[
	A + B \bar K = (\hat X_1 - \Delta_0) \bar Q \bar P^{-1}
\]
and \eqref{eq:quant-err-mx-bnd}, we obtain
\begin{align*}
	&\quad\,\, 
	\hat\delta^2 \|\bar M\| 
	+ 2 \hat\delta \|\hat X_1 \bar M\| \\
	&= 
	\hat\delta^2 \|\bar M\| 
	+ 2 \hat\delta \|(\hat X_1 - \Delta_0) \bar M + \Delta_0 \bar M\| \\
	&\leq 
	\hat\delta^2 \|\bar M\| 
	+ 2 \hat\delta \|\Delta_0\| \|\bar M\| + 2 \hat\delta \|(\hat X_1 - \Delta_0) \bar Q \bar P^{-1} \bar Q^\top\| \\
	&\leq 
	3 \hat\delta^2 \|\bar M\| 
	+ 2 \hat\delta \|(A + B \bar K) \bar Q^\top\| \\
	&= 
	3 \hat\delta^2 \|\bar M\| 
	+ 2 \hat\delta 
	\left\| \begin{bmatrix} B & A \end{bmatrix} \right\|
	\left\| \begin{bmatrix} \bar K \\ I \end{bmatrix} \right\| \|\bar Q \| \\
	&\leq 
	3 \hat\delta^2 \|\bar M\| 
	+ 2 \hat\delta \|\bar Q\| \sqrt{\alpha^2 + \beta^2} \sqrt{\|\bar K\|^2 + 1}.
\end{align*}
The derivation in Section~\ref{sssec:ddc-feas-stab} yields
\begin{align*}
	\|\bar K\|^2 + 1
	\leq \frac{\chi}{\eta_2},
	\qquad
	\|\bar Q\|
	\leq \frac{\chi}{2 \sqrt{\omega} \eta_2}.
\end{align*}
Moreover,
\[
	\|\bar M\|
	\leq \tr(\bar M) 
	= \tr(\bar Q \bar P^{-1} \bar Q^\top) 
	\leq \frac{\bar\gamma}{\omega} 
	\leq \frac{\chi}{\omega \eta_2}.
\]
Therefore,
\begin{align*}
	&\quad\,\, 
	\hat\delta^2 \|\bar M\| 
	+ 2 \hat\delta \|\hat X_1 \bar M\| \\
	&\leq 
	3 \hat\delta^2 \|\bar M\| 
	+ 2 \hat\delta \|\bar Q\| \sqrt{\alpha^2 + \beta^2} \sqrt{\|\bar K\|^2 + 1} \\
	&\leq \frac{3 \hat\delta^2 \chi}{\omega \eta_2} 
	+ \hat\delta \sqrt{\alpha^2 + \beta^2} \sqrt{\frac{\chi^3}{\omega \eta_2^3}} \\
	&= 
	\frac{3 \chi}{\omega \eta_2} 
	\left( \hat\delta + \frac{1}{6} \sqrt{\alpha^2 + \beta^2} \sqrt{\frac{\omega \chi}{\eta_2}} \right)^2 
	- \frac{\chi^2}{12 \eta_2^2} (\alpha^2 + \beta^2).
\end{align*}

Requiring the above expression to be smaller than $ 1 $ and using $ \hat\delta = \sqrt{nT}(\alpha + 1) \delta $ yields the following sufficient condition.
If \eqref{eq:ddc-feas-cond-1} holds and
\begin{equation}\label{eq:ddc-feas-cond-2-alt}
	\delta 
	< 
	\frac{1}{6 \sqrt{n T} (\alpha + 1)} \sqrt{\frac{\omega \chi}{\eta_2}} 
	\left( \sqrt{\alpha^2 + \beta^2 + \frac{12 \eta_2^2}{\chi^2}} - \sqrt{\alpha^2 + \beta^2} \right) 
	=: \bar\delta_2,
\end{equation}
then every optimal solution to \eqref{eq:sdp-quant} satisfies \eqref{eq:ddc-stab-cond}.

This alternative gives the stronger conclusion that every optimal solution satisfies the verifiable stability condition \eqref{eq:ddc-stab-cond}, but at the expense of a more conservative bound on $ \delta $.
Indeed, since
\begin{align*}
	\bar\delta_2 
	&= 
	\frac{2}{\sqrt{n T} (\alpha + 1) 
	\left( \sqrt{\alpha^2 + \beta^2 + \frac{12 \eta_2^2}{\chi^2}} + \sqrt{\alpha^2 + \beta^2} \right)} 
	\sqrt{\frac{\omega \eta_2^3}{\chi^3}} \\
	&< 
	\frac{1}{\sqrt{n T} (\alpha + 1) \sqrt{\alpha^2 + \beta^2}} 
	\sqrt{\frac{\omega \eta_2^3}{\chi^3}} \\
	&< \delta_2,
\end{align*}
\eqref{eq:ddc-feas-cond-2-alt} yields a more conservative bound on $ \delta $ than \eqref{eq:ddc-feas-cond-2}.

Finally, if
\begin{equation}\label{eq:ddc-feas-cond-alt}
	\delta < \min\{\delta_1,\, \bar\delta_2\} =: \bar\delta^*,
\end{equation}
where $ \delta_1, \bar\delta_2 > 0 $ are defined in \eqref{eq:ddc-feas-cond-1} and \eqref{eq:ddc-feas-cond-2-alt}, respectively, then every optimal solution to \eqref{eq:sdp-quant} satisfies \eqref{eq:ddc-stab-cond}.

\begin{rmk}
The proof of Theorem~\ref{thm:ddc-feas} also provides bounds on the optimal solutions to the quantized SDP \eqref{eq:sdp-quant} that are uniform over sufficiently small quantization errors.
In particular, for any fixed $ \eta_2 \in (0, 1) $, if $ \delta \leq \delta_1 $, then every optimal solution $ (\bar\gamma, \bar Q, \bar P, \bar L, \bar W) $ to \eqref{eq:sdp-quant} satisfies
\[ 
	\bar\gamma 
	\leq \frac{\chi}{\eta_2},
	\qquad
	\|\bar Q\| 
	\leq \frac{\chi}{2 \sqrt{\omega} \eta_2},
\]
and $ \bar K, \bar M $ defined in \eqref{eq:ddc-stab-gain} satisfy
\[
	\|\bar K\|^2 + 1 
	\leq \frac{\chi}{\eta_2},
	\qquad
	\|\bar M\|\leq\frac{\chi}{\omega\eta_2},
\]
where $ \delta_1 $ and $ \chi $ depend on $ \eta_2 $ and are defined in \eqref{eq:ddc-feas-cond-1} and \eqref{eq:chi}, respectively.
These bounds are independent of the particular optimal solution.
\end{rmk}

%%%%%%%%%%%%%%%%%%%%%%%%%%%%%%%%%%%%%%%%%%%%%%%%%%%%%%%%%%%%%%%%%%%%%%%%%%%%%%%%
\section{Quantized feedback stabilization}\label{sec:stab}
Having obtained a stabilizing feedback gain from quantized data, we now address closed-loop stabilization using quantized state feedback.

%%%%%%%%%%%%%%%%%%%%%%%%%%%%%%%%%%%%%%%%
\subsection{Quantized state feedback implementation}\label{ssec:stab-quant}
At time $ T $, suppose that a feasible solution $ (\bar\gamma, \bar Q, \bar P, \bar L, \bar W) $ to the quantized SDP \eqref{eq:sdp-quant} has been obtained and satisfies \eqref{eq:ddc-stab-cond}. 
Then by Theorem~\ref{thm:ddc-stab}, the feedback gain $ \bar K := U_0 \bar Q \bar P^{-1} $ satisfies \eqref{eq:ddc-stab}, and $ A + B \bar K $ is Schur stable.

For the transition between the two stages, we let $ u(T) = 0 $ and begin quantized state feedback at time $ T+1 $.
During the stabilization stage, the quantizer uses a fixed number of quantization cells and exponentially decaying bounds:
\begin{equation}\label{eq:quant-num-bnd-stab}
	N_k = N_s,
	\quad
	E_{k+1} = \mu E_k,
	\qquad
	k = T+1, T+2, \ldots,
\end{equation}
where the initial bound $ E_{T+1} $, the contraction factor $ \mu $, and a sufficient condition on $ N_s $ ensuring $ \mu \in (0, 1) $ are derived in Section~\ref{ssec:stab-conv}.
With these choices, the closed-loop state under
\begin{equation}\label{eq:sys-cl}
	x(k+1) = A x(k) + B \bar K \hat x(k),
	\qquad k \geq T+1,
\end{equation}
remains in the quantization range (i.e., \eqref{eq:quant-bnd} holds) for all $ k \geq T + 1 $ and converges exponentially to the origin.

During this stage, since the number of quantization cells remains constant, the quantizer can be expressed as a ``zooming'' quantizer \cite{Liberzon2003Book}:
\[
	\hat x(k) 
	= \mu_k q \left( \frac{x(k)}{\mu_k} \right),
\]
where $ q(\cdot) $ is the quantizer with bound $ E_{T+1} $ and $ N_s $ quantization cells per dimension, and
\[
	\mu_k := \mu^{k - T - 1}, 
	\qquad k = T+1, T+2, \ldots
\]
is the zoom variable.

%%%%%%%%%%%%%%%%%%%%%%%%%%%%%%%%%%%%%%%%
\subsection{Exponential convergence}\label{ssec:stab-conv}
We establish exponential convergence by constructing nested sets that contract under the closed-loop dynamics, based on sublevel sets of
\begin{equation}\label{eq:lya-fcn}
	V(x) := x^\top \bar P^{-1} x,
\end{equation}
where $ \bar P \succ 0 $ is from the feasible solution to \eqref{eq:sdp-quant} obtained at time $ T $. 

We first construct a sublevel set of $ V $ containing $ x(T+1) $ and derive $ E_{T+1} $ such that the quantization range $ \calR(E_{T+1}) $ contains this set.
Recall that
\[
	\calR(E) = \{x \in \R^n: \|x\|_\infty \leq E\},
\]
and define the sublevel set
\[
	\calS(E) := \left\{ x \in \R^n: V(x) \leq \frac{E^2}{\eigMax(\bar P)} \right\}.
\]
Since
\[
	\frac{\|x\|_\infty^2}{\eigMax(\bar P)} 
	\leq \frac{\|x\|^2}{\eigMax(\bar P)}
	\leq V(x) 
	\leq \frac{\|x\|^2}{\eigMin(\bar P)} 
	\leq \frac{n \|x\|_\infty^2}{\eigMin(\bar P)}
	\qquad \forall x \in \R^n,
\]
we have
\[ 
	\calS(E) \subset \calR(E) 
	\subset \calS \left( \sqrt{\frac{n \eigMax(\bar P)}{\eigMin(\bar P)}} E \right)
	\qquad \forall E > 0.
\]
Using $ u(T) = 0 $ and $ x(T) \in \calR(E_T) $, we obtain
\begin{align*}
	\|x(T+1)\|_\infty 
	\leq \|A x(T)\| 
	\leq \alpha \|x(T)\|
	\leq \alpha \sqrt{n} E_T.
\end{align*}
Therefore, defining
\begin{equation}\label{eq:quant-bnd-stab-ini}
	E_{T+1} := \alpha n \sqrt{\frac{\eigMax(\bar P)}{\eigMin(\bar P)}} E_T
\end{equation}
yields
\[
	x(T+1) 
	\in \calR(\alpha \sqrt{n} E_T) 
	\subset \calS(E_{T+1}) 
	\subset \calR(E_{T+1}).
\]
Thus, $ \calR(E_{T+1}) $ can be used as the initial quantization range for the stabilization stage.

A tighter initial bound can be obtained by exploiting the available quantized measurement $ \hat x(T) $.
By \eqref{eq:quant-err-bnd-ddc},
\[
	\|x(T)\|
	\leq \|\hat x(T)\| + \|d(T)\|
	\leq \|\hat x(T)\| + \sqrt{n} \delta.
\]
Therefore, letting
\begin{equation}\label{eq:quant-bnd-stab-ini-data}
	E_{T+1} 
	:= 
	\alpha
	\sqrt{\frac{n \eigMax(\bar P)}{\eigMin(\bar P)}}
	(\|\hat x(T)\| + \sqrt{n} \delta)
\end{equation}
also ensures that
\[
	x(T+1) 
	\in \calR(\alpha \|x(T)\|) 
	\subset \calS(E_{T+1}) 
	\subset \calR(E_{T+1}).
\]

Our third theorem establishes a sufficient condition for exponential convergence.
For convenience, define
\begin{equation}\label{eq:stab-conv-coef}
	\theta := 
	\frac{
		(\alpha + \beta \|\bar K\|) \beta \|\bar K\| \eigMax(\bar P)
	}{
		\eigMin(\bar P)
	}, 
	\qquad
	\phi := 
	\frac{
		\beta^2 \|\bar K\|^2
	}{
		\eigMin(\bar P)
	},
\end{equation}
where $ \alpha, \beta > 0 $ are as in \eqref{eq:sys-mx-bnd}.

\begin{thm}\label{thm:stab-conv}
Suppose that the number of quantization cells satisfies
\begin{equation}\label{eq:stab-conv-cond}
	N_s 
	> \frac{
		\theta + \sqrt{\theta^2 + \eta_1 \phi}
	}{
		\eta_1
	}
	\sqrt{\frac{
		n \eigMax(\bar P)^3
	}{
		\eigMin(\bar P)
	}} 
	=: N^*,
\end{equation}
where $ \eta_1 \in (0, 1) $ is defined in \eqref{eq:ddc-stab-coef}.
Then there exists $ \varepsilon > 0 $ such that
\begin{equation}\label{eq:stab-conv-cond-eps}
	N^*_\varepsilon
	:= 
	\sqrt{\frac{
		\bigl( \theta + \sqrt{\theta^2 + \eta_1 (\phi + \varepsilon)} \bigr)^2 n \eigMax(\bar P)^3
	}{
		\eta_1^2 \eigMin(\bar P)
	} 
	- \varepsilon n \eigMax(\bar P)} 
\end{equation}
satisfies $ N_s > N^*_\varepsilon > N^* $.
Moreover, for any such $ \varepsilon $,
\begin{equation}\label{eq:stab-conv}
	\|x(k)\|_\infty 
	\leq \mu^{k - T - 1} E_{T+1} 
	\qquad \forall k \geq T + 1,
\end{equation}
where
\begin{equation}\label{eq:stab-conv-rate}
	\mu 
	:= \sqrt{\max \left\{ 
		\frac{(N^*_\varepsilon)^2}{N_s^2},\, 
		1 - \frac{n \varepsilon \eigMax(\bar P)}{N_s^2} 
	\right\}} < 1.
\end{equation}
\end{thm}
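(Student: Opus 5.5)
The plan is to run an induction on $k \geq T+1$, carried out on the sublevel sets $\calS(\cdot)$ of the Lyapunov function $V(x) = x^\top \bar P^{-1} x$. The inductive claim is $x(k) \in \calS(E_k)$, with $E_k = \mu^{k-T-1} E_{T+1}$. Since $\calS(E) \subset \calR(E)$, this gives \eqref{eq:stab-conv}. The base case $x(T+1) \in \calS(E_{T+1})$ was already established just before the theorem.

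Before the induction, I would dispose of the existence of $\varepsilon$. The map $\varepsilon \mapsto N^*_\varepsilon$ is continuous on a neighborhood of $0$, and at $\varepsilon = 0$ it equals $N^*$ exactly. So for $N_s > N^*$, all sufficiently small $\varepsilon > 0$ give $N^*_\varepsilon < N_s$. To get $N^*_\varepsilon > N^*$, I would differentiate the radicand in $\varepsilon$: the gain from the $\sqrt{\theta^2+\eta_1(\phi+\varepsilon)}$ term must beat the loss from $-\varepsilon n \eigMax(\bar P)$. Its derivative at $0$ is at least $\frac{n\eigMax(\bar P)^3}{\eta_1 \eigMin(\bar P)}\cdot\frac{\theta+\sqrt{\cdot}}{\sqrt{\cdot}} - n\eigMax(\bar P)$. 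This is positive because $\eigMax(\bar P)^2/\eigMin(\bar P) \geq \eigMax(\bar P) \geq 1$ (using $\bar P \succeq I$) and $\eta_1 < 1$. If needed, I would simply choose $\varepsilon$ small enough, or else handle this monotonicity directly. Next, $\mu < 1$ holds because $N^*_\varepsilon < N_s$ and $n\varepsilon\eigMax(\bar P) > 0$. The second entry of the max is positive since $N_s^2 > (N^*_\varepsilon)^2 \geq 0$ forces $N_s^2 > n\varepsilon\eigMax(\bar P) - \ldots$; I would check this from the definition of $N^*_\varepsilon$, where the radicand exceeds $\varepsilon n \eigMax(\bar P)$ times something, or otherwise just note that the max is nonnegative.

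For the inductive step, write $x^+ = (A+B\bar K)x + B\bar K d$ with $\|d\| \leq \sqrt n E_k/N_s$. I would work with the transposed inequality: \eqref{eq:ddc-stab} is equivalent, by congruence and a Schur complement, to
\[
(A+B\bar K)^\top \bar P^{-1}(A+B\bar K) \preceq \bar P^{-1} - \eta_1 \bar P^{-1}\bar P^{-1}\cdot(\ldots).
\]
More precisely, $F\bar P F^\top \preceq \bar P - \eta_1 I$ implies $F^\top \bar P^{-1} F \preceq \bar P^{-1} - \frac{\eta_1}{\eigMax(\bar P)^2}I$, approximately, via $\bar P^{-1/2}F\bar P^{1/2}$ having norm squared at most $1-\eta_1/\eigMax(\bar P)$. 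Hence
\[
V((A+B\bar K)x) \leq \Bigl(1-\frac{\eta_1}{\eigMax(\bar P)}\Bigr)V(x).
\]
The cross term $2x^\top(A+B\bar K)^\top\bar P^{-1}B\bar K d$ and the quadratic term $d^\top \bar K^\top B^\top \bar P^{-1}B\bar K d$ are bounded by $\frac{2(\alpha+\beta\|\bar K\|)\beta\|\bar K\|}{\eigMin(\bar P)}\|x\|\|d\|$ and $\phi\|d\|^2$. Combining this with $\|x\|^2 \leq \eigMax(\bar P)V(x)$ and $V(x) \leq E_k^2/\eigMax(\bar P)$ gives
\[
V(x^+) \leq \frac{E_k^2}{\eigMax(\bar P)}\Bigl[1 - \frac{\eta_1}{\eigMax(\bar P)} + \frac{2\theta'\sqrt n}{N_s} + \frac{\phi\, n\eigMax(\bar P)}{N_s^2}\Bigr],
\]
with $\theta'$ matching $\theta$ after scaling. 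I then need this bracket to be at most $\mu^2$, i.e. $V(x^+) \leq \mu^2E_k^2/\eigMax(\bar P) = E_{k+1}^2/\eigMax(\bar P)$.

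The main obstacle is making the scalar algebra reproduce exactly $N^*_\varepsilon$ and $\mu$. The quantity $N^*_\varepsilon$ should be the larger root of the quadratic inequality in $N_s$ obtained by demanding that the bracket be at most $1 - n\varepsilon\eigMax(\bar P)/N_s^2$. Multiplying by $N_s^2$ and using $\eigMax(\bar P)$ factors to absorb $\theta$'s definition gives a quadratic $aN^2 - 2bN - c \geq 0$ with $a = \eta_1/\eigMax(\bar P)$ and $c = n\eigMax(\bar P)(\phi+\varepsilon)$. Completing the square yields $N \geq (b+\sqrt{b^2+ac})/a$. I would then reconcile this with the stated form, which includes the $-\varepsilon n\eigMax(\bar P)$ under the root. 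That suggests the authors split the bracket into $(N^*_\varepsilon)^2/N_s^2$ plus the $1-n\varepsilon\eigMax/N_s^2$ alternative, using a convex-combination or max argument: for $N_s > N^*_\varepsilon$, the bracket is at most the max of those two quantities. I would attempt that bookkeeping carefully, tracking where $\eigMax(\bar P)^3/\eigMin(\bar P)$ arises from the crude bounds $\|x\|\leq\sqrt{\eigMax(\bar P)}\sqrt{V}$ and the passage from $\calR$ to $\calS$.
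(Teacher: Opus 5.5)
Your one-step bound
\[
V(x^+) \leq \frac{E_k^2}{\eigMax(\bar P)} \Bigl[ 1 - \frac{\eta_1}{\eigMax(\bar P)} + \frac{2\theta\sqrt{n}}{N_s} + \frac{\phi\, n \eigMax(\bar P)}{N_s^2} \Bigr]
\]
is correct (with $\theta' = \theta$, using $\|x\| \leq E_k$ on $\calS(E_k)$). It is a genuinely different, case-free route from the paper's. However, the proposal stops exactly at the step that produces $\mu$, and your guess about how that step closes is wrong.

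The larger root of your quadratic $aN^2 - 2bN - c$ is
\[
\tilde N_\varepsilon := \frac{\eigMax(\bar P)\sqrt{n}\bigl(\theta + \sqrt{\theta^2 + \eta_1(\phi+\varepsilon)}\bigr)}{\eta_1},
\]
not $N^*_\varepsilon$. In fact $(N^*_\varepsilon)^2 = \frac{\eigMax(\bar P)}{\eigMin(\bar P)}\tilde N_\varepsilon^2 - \varepsilon n \eigMax(\bar P)$, which is strictly smaller than $\tilde N_\varepsilon^2$ when, e.g., $\bar P$ is a multiple of $I$. For $N_s \in (N^*_\varepsilon, \tilde N_\varepsilon)$ your bracket strictly exceeds $1 - n\varepsilon\eigMax(\bar P)/N_s^2$. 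So you need an argument that the bracket is then dominated by the other branch $(N^*_\varepsilon)^2/N_s^2$ of the max. You do not supply one; the ``convex-combination or max argument'' is left open. Also, the factor $\eigMax(\bar P)^3/\eigMin(\bar P)$ in $N^*_\varepsilon$ does not come from $\|x\| \leq \sqrt{\eigMax(\bar P)V(x)}$ or from the $\calR$-to-$\calS$ conversion, so tracking those will not reproduce it.

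The missing step is short in your framework. Put
\[
h(N) := \Bigl(1 - \frac{\eta_1}{\eigMax(\bar P)}\Bigr)N^2 + 2\theta\sqrt{n}N + \phi n\eigMax(\bar P),
\]
which is $N^2$ times your bracket. Since $\eta_1 < 1 \leq \eigMax(\bar P)$, $h$ is nondecreasing on $N > 0$, and $h(\tilde N_\varepsilon) = \tilde N_\varepsilon^2 - \varepsilon n\eigMax(\bar P)$. Hence:
\begin{enumerate}
\item if $N_s \geq \tilde N_\varepsilon$, the bracket is at most $1 - n\varepsilon\eigMax(\bar P)/N_s^2$;
\item if $N_s < \tilde N_\varepsilon$, the bracket is at most $(\tilde N_\varepsilon^2 - \varepsilon n\eigMax(\bar P))/N_s^2 \leq (N^*_\varepsilon)^2/N_s^2$, using $\eigMax(\bar P) \geq \eigMin(\bar P)$.
\end{enumerate}
So $V(x^+) \leq \mu^2E_k^2/\eigMax(\bar P)$ for every $N_s$, and $N_s > N^*_\varepsilon$ is used only to get $\mu < 1$. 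Your route in fact certifies convergence already for $N_s > \tilde N_0 = N^*\sqrt{\eigMin(\bar P)/\eigMax(\bar P)}$.

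The paper obtains the two branches differently, by splitting on $\|x\|$. Above the threshold, the completed square absorbs the quantization terms and gives $V^+ \leq V - \varepsilon nE^2/N_s^2$. Below it, the paper bounds $V \leq \|x\|^2/\eigMin(\bar P)$ and uses monotonicity in $\|x\|$. That crude bound, combined with the factor $\eigMax(\bar P)/\sqrt{\eta_1}$ in the threshold, is exactly the source of $\eigMax(\bar P)^3/\eigMin(\bar P)$ and of the term $-\varepsilon n\eigMax(\bar P)$ in $N^*_\varepsilon$.
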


The proof of Theorem~\ref{thm:stab-conv} relies on the following lemma.
\begin{lem}\label{lem:stab-lya}
For any $ k \geq T + 1 $, if
\[
	x(k) \in \calS(E_k),
\]
then
\begin{equation*}
	x(k+1) \in \calS(\mu E_k),
\end{equation*}
where $ \mu $ is defined in \eqref{eq:stab-conv-rate}.
\end{lem}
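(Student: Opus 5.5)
The plan is to bound $V(x(k+1))$ directly in terms of $V(x(k))$ and the quantization error, using the Lyapunov inequality \eqref{eq:ddc-stab} from Theorem~\ref{thm:ddc-stab}. Write $F := A + B\bar K$. The closed loop \eqref{eq:sys-cl} then reads
\[
	x(k+1) = F x(k) + B\bar K d(k).
\]
Since $x(k) \in \calS(E_k) \subset \calR(E_k)$, the quantizer is operating in range. Hence \eqref{eq:quant-err-bnd} with $N_k = N_s$ gives
\[
	\|d(k)\| \leq \sqrt{n}\,\|d(k)\|_\infty \leq \sqrt{n}\,E_k/N_s.
\]
Also, $V(x(k)) \leq E_k^2/\eigMax(\bar P)$ gives $\|x(k)\| \leq \sqrt{\eigMax(\bar P)\,V(x(k))} \leq E_k$.

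\emph{Step 1 (contraction of the nominal part).} I convert \eqref{eq:ddc-stab}, which is in the "dual" form $F\bar P F^\top \preceq \bar P - \eta_1 I$, into a decrease estimate for $V$. Set $G := \bar P^{-1/2} F \bar P^{1/2}$. Then
\[
	GG^\top \preceq I - \eta_1 \bar P^{-1} \preceq \bigl(1 - \eta_1/\eigMax(\bar P)\bigr) I,
\]
so $\|G\|^2 = \|G^\top G\| \leq 1 - \eta_1/\eigMax(\bar P)$. Consequently
\[
	V(Fx) = \|G\,\bar P^{-1/2} x\|^2 \leq \bigl(1 - \eta_1/\eigMax(\bar P)\bigr)\, V(x).
\]

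\emph{Step 2 (perturbation terms).} Expand
\[
	V(x(k+1)) = V(Fx) + 2x^\top F^\top \bar P^{-1} B\bar K d + d^\top \bar K^\top B^\top \bar P^{-1} B\bar K d.
\]
I bound the middle term by $2\|Fx\|\,\|B\bar K d\|/\eigMin(\bar P)$, using $\|F\| \leq \alpha + \beta\|\bar K\|$, $\|x\| \leq E_k$ and $\|d\| \leq \sqrt{n}E_k/N_s$. I bound the last term by $\beta^2\|\bar K\|^2\|d\|^2/\eigMin(\bar P)$. With $\theta, \phi$ from \eqref{eq:stab-conv-coef} this gives, after multiplying by $\eigMax(\bar P)/E_k^2$ and setting $s := 1/N_s$,
\[
	\frac{\eigMax(\bar P)\,V(x(k+1))}{E_k^2} \leq 1 - \frac{\eta_1}{\eigMax(\bar P)} + 2\sqrt{n}\,\theta\, s + n\,\phi\,\eigMax(\bar P)\, s^2 =: f(s).
\]
The $\eigMax(\bar P)/\eigMin(\bar P)$ factors may need reshuffling; I would keep them so that they match the normalization $\sqrt{n\eigMax(\bar P)^3/\eigMin(\bar P)}$ appearing in \eqref{eq:stab-conv-cond}.

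\emph{Step 3 (matching $\mu$).} It remains to show $f(s) \leq \mu^2$, with $\mu$ as in \eqref{eq:stab-conv-rate}. I add and subtract $n\varepsilon\eigMax(\bar P)s^2$ and write
\[
	f(s) = 1 - n\varepsilon\eigMax(\bar P)s^2 - g(s),
\]
where $g$ is $\eta_1/\eigMax(\bar P)$ minus a quadratic in $s$ whose $s^2$-coefficient carries $\phi + \varepsilon$. The positive root of $g$ is exactly $1/N^*_\varepsilon$: that is how $\theta + \sqrt{\theta^2 + \eta_1(\phi+\varepsilon)}$ arises. Since $N_s > N^*_\varepsilon$ gives $s < 1/N^*_\varepsilon$, we get $g(s) \geq 0$ and hence $f(s) \leq 1 - n\varepsilon\eigMax(\bar P)/N_s^2 \leq \mu^2$. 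The first entry $(N^*_\varepsilon)^2/N_s^2$ in the max presumably handles the regime or normalization where this splitting is arranged differently; I would verify which branch is active by comparing the two expressions directly.

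The main obstacle is this last algebraic step: making the constants in $f$ line up exactly with \eqref{eq:stab-conv-cond-eps}, including the $-\varepsilon n\eigMax(\bar P)$ under the square root. The estimates themselves are routine.
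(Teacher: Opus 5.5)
Steps 1 and 2 are correct: $f(s)$ is a valid upper bound on $\eigMax(\bar P)\,V(x(k+1))/E_k^2$. The gap is in Step 3, whose key claim is false. The positive root of $g$ is $1/\tilde N_\varepsilon$, where
\[
	\tilde N_\varepsilon := \frac{\sqrt{n}\,\eigMax(\bar P)}{\eta_1}\Bigl(\theta + \sqrt{\theta^2 + \eta_1(\phi+\varepsilon)}\Bigr),
\]
whereas \eqref{eq:stab-conv-cond-eps} reads $(N^*_\varepsilon)^2 = \frac{\eigMax(\bar P)}{\eigMin(\bar P)}\tilde N_\varepsilon^2 - \varepsilon n \eigMax(\bar P)$. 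Neither quantity dominates the other in general.

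When $\bar P$ is a multiple of $I$, or $\eigMax(\bar P)/\eigMin(\bar P)$ is close to $1$, you get $N^*_\varepsilon < \tilde N_\varepsilon$. For any $N_s$ with $N^*_\varepsilon < N_s < \tilde N_\varepsilon$, which the hypotheses do not exclude, you then have $g(1/N_s) < 0$, i.e.\ $f(1/N_s) > 1 - n\varepsilon\eigMax(\bar P)/N_s^2$. So the second entry of the max cannot carry the argument alone. Proving $f(1/N_s) \le (N^*_\varepsilon/N_s)^2$ in that regime is exactly the missing step, not a matter of normalization.

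For orientation, the paper's two entries of $\mu$ come from a case split on $\|x(k)\|$, not on $N_s$. The paper does not bound $\|x\| \le E_k$ in the cross term; it completes the square in $\|x\|$. A large state gives $V^+ \le V - \varepsilon n E_k^2/N_s^2$. A small state, combined with $V \le \|x\|^2/\eigMin(\bar P)$, produces the factor $\eigMax(\bar P)^3/\eigMin(\bar P)$ and the term $-\varepsilon n\eigMax(\bar P)$ in $N^*_\varepsilon$.

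Your route can be completed by splitting on $N_s$ instead.

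\emph{Case $N_s \ge \tilde N_\varepsilon$.} Then $g(1/N_s) \ge 0$ and your argument goes through.

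\emph{Case $N_s < \tilde N_\varepsilon$.} Let $s = 1/N_s$, $t := \tilde N_\varepsilon s > 1$, $e := \eta_1/\eigMax(\bar P) \in (0,1)$, $a := \theta + \sqrt{\theta^2 + \eta_1(\phi+\varepsilon)}$, and $c := \eta_1(\phi+\varepsilon)/a^2 \in (0,1]$. Using $a^2 = 2\theta a + \eta_1(\phi+\varepsilon)$, one checks
\[
	f(s) - \bigl(\tilde N_\varepsilon^2 - n\varepsilon\eigMax(\bar P)\bigr) s^2 = (1 - t)\bigl((1 - ec)t + 1 - e\bigr) < 0.
\]
Moreover $\tilde N_\varepsilon^2 - n\varepsilon\eigMax(\bar P) \le (N^*_\varepsilon)^2$ because $\eigMax(\bar P) \ge \eigMin(\bar P)$. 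Hence $f(s) \le (N^*_\varepsilon/N_s)^2 \le \mu^2$.

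With this addition your proof is complete. Unlike your current Step 3, it does not use $N_s > N^*_\varepsilon$ at all; that hypothesis only matters for $\mu < 1$ in the theorem.

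Your worst-case bound $\|x\| \le E_k$ also avoids the paper's round trip between $V$ and $\|x\|^2$, which costs a factor $\eigMax(\bar P)/\eigMin(\bar P)$. As a result, $f(1/N_s) < 1$ already holds once $N_s > N^*\sqrt{\eigMin(\bar P)/\eigMax(\bar P)}$.
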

\begin{proof}
For brevity, let $ x = x(k) $, $ x^+ = x(k+1) $, $ V = V(x) $, $ V^+ = V(x^+) $, $ d = d(k) $, and $ E = E_k $ in this proof.
The closed-loop dynamics \eqref{eq:sys-cl} gives
\begin{align*}
	V^+ &= 
	\bigl( (A + B \bar K) x + B \bar K d \bigr)^\top 
	\bar P^{-1} 
	\bigl( (A + B \bar K) x + B \bar K d \bigr) \\
	&= 
	x^\top (A + B \bar K)^\top \bar P^{-1} (A + B \bar K) x \\
	&\quad\, + 2 x^\top (A + B \bar K)^\top \bar P^{-1} B \bar K d + d^\top \bar K^\top B^\top \bar P^{-1} B \bar K d.
\end{align*}

We first bound the nominal closed-loop term.
By \eqref{eq:ddc-stab},
\[
	(A + B \bar K) \bar P (A + B \bar K)^\top 
	\preceq \bar P - \eta_1 I 
	\preceq \left( 1 - \frac{\eta_1}{\eigMax(\bar P)} \right) \bar P.
\]
Thus,
\[
	\bar P^{-1/2} (A + B \bar K) 
	\bar P 
	(A + B \bar K)^\top \bar P^{-1/2} 
	\preceq 
	\left( 1 - \frac{\eta_1}{\eigMax(\bar P)} \right) I,
\]
which implies
\[
	\left\| \bar P^{-1/2} (A + B \bar K) \bar P^{1/2} \right\|^2 
	\leq 1 - \frac{\eta_1}{\eigMax(\bar P)}.
\]
Since a matrix and its transpose have the same spectral norm, we have
\[
	\bar P^{1/2} (A + B \bar K)^\top
	\bar P^{-1} 
	(A + B \bar K) \bar P^{1/2} 
	\preceq 
	\left( 1 - \frac{\eta_1}{\eigMax(\bar P)} \right) I,
\]
and hence
\[
	(A + B \bar K)^\top
	\bar P^{-1} 
	(A + B \bar K)
	\preceq 
	\left( 1 - \frac{\eta_1}{\eigMax(\bar P)} \right) \bar P^{-1}.
\]
Consequently,
\begin{align*}
	x^\top (A + B \bar K)^\top \bar P^{-1} (A + B \bar K) x
	&\leq 
	\left( 1 - \frac{\eta_1}{\eigMax(\bar P)} \right) x^\top \bar P^{-1} x \\
	&\leq 
	V - \frac{\eta_1}{\eigMax(\bar P)^2} \|x\|^2.
\end{align*}
We next bound the terms due to quantization.
Since $ x \in \calS(E) \subset \calR(E) $, by \eqref{eq:sys-mx-bnd} and \eqref{eq:quant-err-bnd},
\begin{align*}
	&\quad\,\, x^\top (A + B \bar K)^\top \bar P^{-1} B \bar K d \\
	&\leq 
	\frac{
		(\alpha + \beta \|\bar K\|) \beta \|\bar K\|
	}{
		\eigMin(\bar P)
	} 
	\|x\| \sqrt{n} \|d\|_\infty \\
	&\leq 
	\frac{
		(\alpha + \beta \|\bar K\|) \beta \|\bar K\| \eigMax(\bar P)
	}{
		\eigMin(\bar P)
	} 
	\frac{\sqrt{n} E}{\sqrt{\eta_1} N_s} 
	\frac{\sqrt{\eta_1}}{\eigMax(\bar P)} 
	\|x\| \\
	&= 
	\frac{\theta \sqrt{n} E}{\sqrt{\eta_1} N_s} 
	\frac{\sqrt{\eta_1}}{\eigMax(\bar P)} 
	\|x\|,
\end{align*}
where $ \theta $ is defined in \eqref{eq:stab-conv-coef}, and
\begin{align*}
	d^\top \bar K^\top B^\top \bar P^{-1} B \bar K d
	&\leq 
	\frac{\beta^2 \|\bar K\|^2}{\eigMin(\bar P)} 
	n \|d\|_\infty^2 \\
	&\leq 
	\frac{\beta^2 \|\bar K\|^2}{\eigMin(\bar P)} \frac{n E^2}{N_s^2} \\
	&\leq 
	\frac{\phi n E^2}{N_s^2},
\end{align*}
where $ \phi $ is defined in \eqref{eq:stab-conv-coef}.
Combining these bounds and completing the square yields
\begin{equation}\label{eq:stab-lya-bnd}
\begin{aligned}
	V^+
	&\leq V 
	- \frac{\eta_1}{\eigMax(\bar P)^2} \|x\|^2 
	+ \frac{2 \theta \sqrt{n} E}{\sqrt{\eta_1} N_s} 
	\frac{\sqrt{\eta_1}}{\eigMax(\bar P)} \|x\| 
	+ \frac{\phi n E^2}{N_s^2} \\
	&= V 
	- \left( 
		\frac{\sqrt{\eta_1}}{\eigMax(\bar P)} \|x\| 
		- \frac{\theta \sqrt{n} E}{\sqrt{\eta_1} N_s} 
	\right)^2 
	+ \left(
		\frac{\theta^2}{\eta_1} + \phi
	\right) 
	\frac{n E^2}{N_s^2}.
\end{aligned}
\end{equation}

We consider two cases according to the magnitude of the state.
If
\begin{equation}\label{eq:stab-lya-case}
	\|x\| 
	\geq \frac{\eigMax(\bar P)}{\sqrt{\eta_1}} 
	\left(
		\frac{\theta}{\sqrt{\eta_1}} + \sqrt{\frac{\theta^2}{\eta_1} + \phi + \varepsilon}
	\right)
	\frac{\sqrt{n} E}{N_s},
\end{equation}
then the squared term in \eqref{eq:stab-lya-bnd} is at least $ (\theta^2/\eta_1 + \phi + \varepsilon) n E^2/N_s^2 $.
Since $ x \in \calS(E) $, we have
\[
	V^+
	\leq 
	V - \frac{\varepsilon n E^2}{N_s^2} 
	\leq
	\left( 1 - \frac{\varepsilon n \eigMax(\bar P)}{N_s^2} \right) 
	\frac{E^2}{\eigMax(\bar P)}.
\]
Otherwise, using $ V \leq \|x\|^2/\eigMin(\bar P) $ in \eqref{eq:stab-lya-bnd} gives
\begin{align*}
	V^+ 
	&\leq 
	\left( 
		\frac{1}{\eigMin(\bar P)} 
		- \frac{\eta_1}{\eigMax(\bar P)^2} 
	\right) \|x\|^2 
	+ \frac{2 \theta \sqrt{n} E}{N_s \eigMax(\bar P)} \|x\| 
	+ \frac{\phi n E^2}{N_s^2}.
\end{align*}
Since $ \eigMax(\bar P) \geq \eigMin(\bar P) \geq 1 > \eta_1 $, the right-hand side is nondecreasing in $ \|x\| $.
Using the upper bound on $ \|x\| $ given by the complement of \eqref{eq:stab-lya-case}, we obtain
\begin{align*}
	V^+
	&< 
	\frac{\eigMax(\bar P)^2}{\eta_1 \eigMin(\bar P)} 
	\left(
		\frac{\theta}{\sqrt{\eta_1}} + \sqrt{\frac{\theta^2}{\eta_1} + \phi + \varepsilon}
	\right)^2
	\frac{n E^2}{N_s^2} 
	- \frac{\varepsilon n E^2}{N_s^2} \\
	&= 
	\left( 
		\frac{\big( \theta + \sqrt{\theta^2 + \eta_1 (\phi + \varepsilon)} \bigr)^2  \eigMax(\bar P)^2}
		{\eta_1^2 \eigMin(\bar P)} 
	- \varepsilon \right) 
	\frac{n E^2}{N_s^2} \\
	&= 
	\frac{(N^*_\varepsilon)^2}{N_s^2} 
	\frac{E^2}{\eigMax(\bar P)},
\end{align*}
where $ N^*_\varepsilon $ is defined in \eqref{eq:stab-conv-cond-eps}.
Combining both cases, we obtain
\[
	V^+ \leq \frac{\mu^2 E^2}{\eigMax(\bar P)},
\]
where $ \mu $ is defined in \eqref{eq:stab-conv-rate}. 
Hence $ x^+ \in \calS(\mu E) $.
\end{proof}

\begin{proof}[Proof of Theorem~\ref{thm:stab-conv}]
First, since 
$ \bigl( \theta + \sqrt{\theta^2 + \eta_1 (\phi + \varepsilon)} \bigr)^2 \geq \eta_1 \varepsilon $ 
and $ \eigMax(\bar P)^2 > \eta_1 \eigMin(\bar P) $, 
it follows that $ N^*_\varepsilon $ is well defined and positive for every $ \varepsilon > 0 $.
Moreover, $ N^*_\varepsilon $ is strictly increasing in $ \varepsilon $, and
\[
    \lim_{\varepsilon \to 0^+} N^*_\varepsilon = N^*.
\]
Thus, under \eqref{eq:stab-conv-cond}, there exists a sufficiently small $ \varepsilon > 0 $ such that $ N_s > N^*_\varepsilon > N^* $. 
For any such $ \varepsilon $, both terms in the maximum in \eqref{eq:stab-conv-rate} are strictly smaller than one, and $ N^*_\varepsilon > 0 $. Hence $ \mu \in (0, 1) $.

By either \eqref{eq:quant-bnd-stab-ini} or \eqref{eq:quant-bnd-stab-ini-data},
\[
	x(T+1) 
	\in \calS(E_{T+1}).
\]
Therefore, iterating Lemma~\ref{lem:stab-lya} gives
\[
	x(k) 
	\in 
	\calS(\mu^{k-T-1} E_{T+1}) 
	\subset 
	\calR(\mu^{k-T-1} E_{T+1})
	\qquad 
	\forall k \geq T + 1,
\]
which proves \eqref{eq:stab-conv}.
\end{proof}

\begin{rmk}
Note that the first term in the maximum in \eqref{eq:stab-conv-rate} is strictly increasing in $ \varepsilon $, whereas the second term in the maximum is strictly decreasing. 
Consequently, the smallest bound on the contraction factor $ \mu $ is obtained by choosing $ \varepsilon $ such that the two terms are equal.	
\end{rmk}

%%%%%%%%%%%%%%%%%%%%%%%%%%%%%%%%%%%%%%%%%%%%%%%%%%%%%%%%%%%%%%%%%%%%%%%%%%%%%%%%
\section{Simulation results}\label{sec:sim}

%%%%%%%%%%%%%%%%%%%%%%%%%%%%%%%%%%%%%%%%
%\subsection{Quantized controller design and stabilization}
We illustrate the proposed approach on the LTI system \eqref{eq:sys} with
\[
	A = \begin{bmatrix}
		2 & 1 \\
		-1 & -1
	\end{bmatrix}, \qquad
	B = \begin{bmatrix}
		1 \\
		0
	\end{bmatrix},
\]
which is unstable in open loop.
The dimensions are $ n = 2 $ and $ m = 1 $.
For the controller design stage, we use $ T = m (n+1) + n = 5 $, the minimum data length allowed by \eqref{eq:ddc-T-cond}.
The stabilization stage is simulated for $ 30 $ steps.

During the design stage, the input sequence is PE of order $ n+1 = 3 $ and satisfies $ \|u(k)\| \leq \bar u = 1 $.
The initial state is $ x_0 = \begin{bmatrix} 1 & 1 \end{bmatrix}^\top $.
We use $ E_0 = 1 $, $ \delta = 0.01 $, $ \alpha = 3 > \|A\| $, and $ \beta = 1.5 > \|B\| $.
Using \eqref{eq:quant-lvl-ddc} together with the measurement-dependent update rule \eqref{eq:quant-bnd-ddc-data}, the numbers of quantization cells per dimension are
\[ 
	(N_0, N_1, \ldots, N_5) = (100, 461, 1174, 951, 1742, 2658).
\]

The collected data satisfy the rank condition \eqref{eq:rank-cond-quant}.
Solving the quantized SDP \eqref{eq:sdp-quant} with $ \omega = 1 $ yields 
\[ 
	\bar K 
	= U_0 \bar Q \bar P^{-1} 
	= \begin{bmatrix} -1.129 & -0.333 \end{bmatrix}, 
\]
and $ \eta_1 = 0.338 > 0 $.
Thus, the verifiable condition \eqref{eq:ddc-stab-cond} holds, and Theorem~\ref{thm:ddc-stab} guarantees that $ A + B \bar K $ is Schur stable. Indeed, its eigenvalues are $ 0.392 $ and $ -0.521 $.

During the stabilization stage, the initial quantization range is computed using the measurement-dependent initialization \eqref{eq:quant-bnd-stab-ini-data}, yielding $ E_{T+1} = 118.9 $.
The sufficient condition \eqref{eq:stab-conv-cond} in Theorem~\ref{thm:stab-conv} gives the bound $ N^* = 2623.6 $ and hence $ N_s \geq 2624 $.
In the simulation, we use $ N_s = 50 $ cells per dimension and set $ \mu = 0.95 $. 
Although these parameter choices are not certified by Theorem~\ref{thm:stab-conv}, the simulation results in Fig.~\ref{fig:sim} show successful convergence to the origin.

\begin{figure}[!ht]
\centering
\begin{subfigure}{.475\textwidth}
\centering
\includegraphics[width=1\textwidth,trim={10pt 6pt 10pt 0pt}]{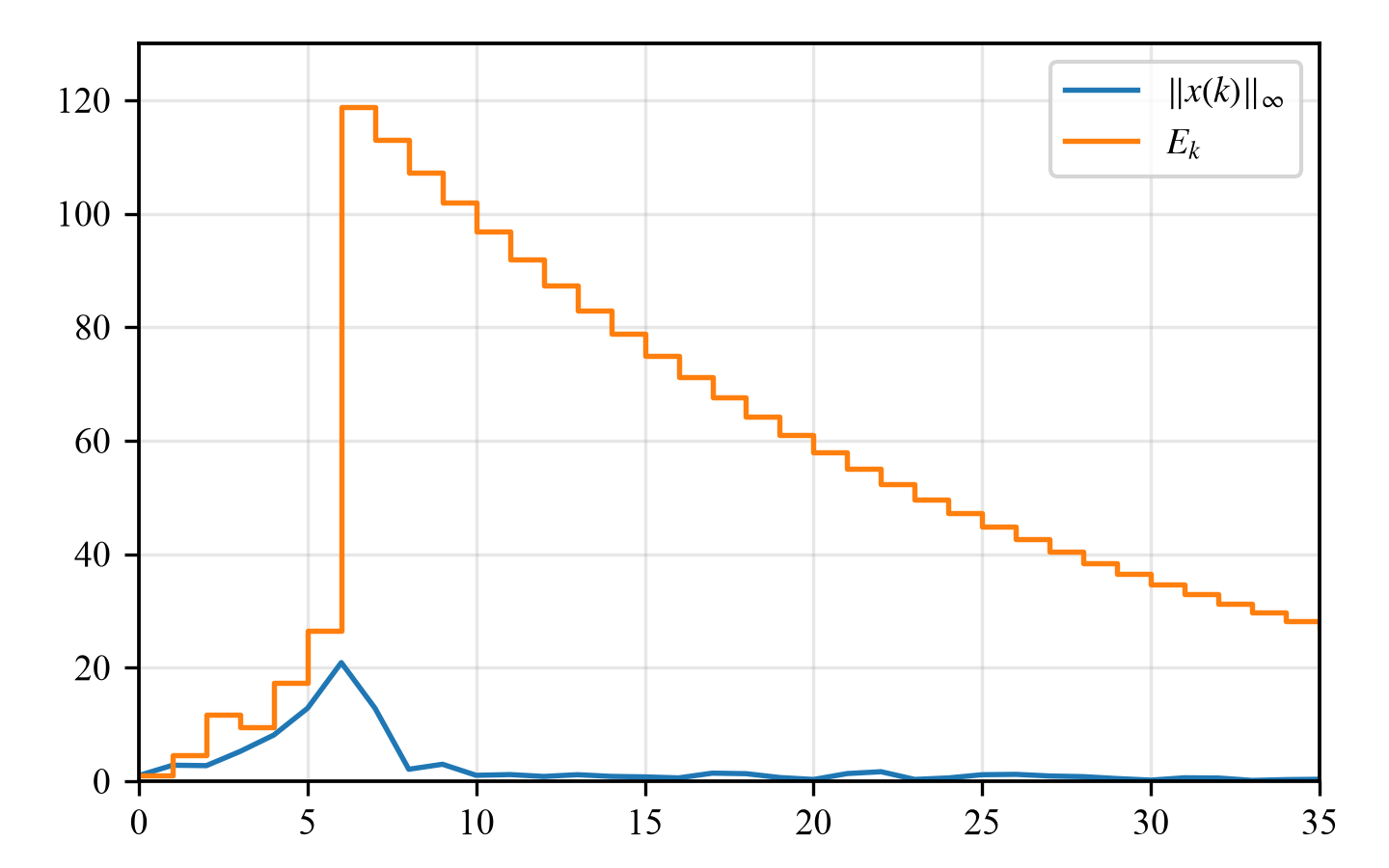}% ,frame,clip
\caption{}
\label{fig:sim-E}
\end{subfigure}%
\hfill
% \\
\begin{subfigure}{.475\textwidth}
\centering
\includegraphics[width=1\textwidth,trim={10pt 6pt 10pt 0pt}]{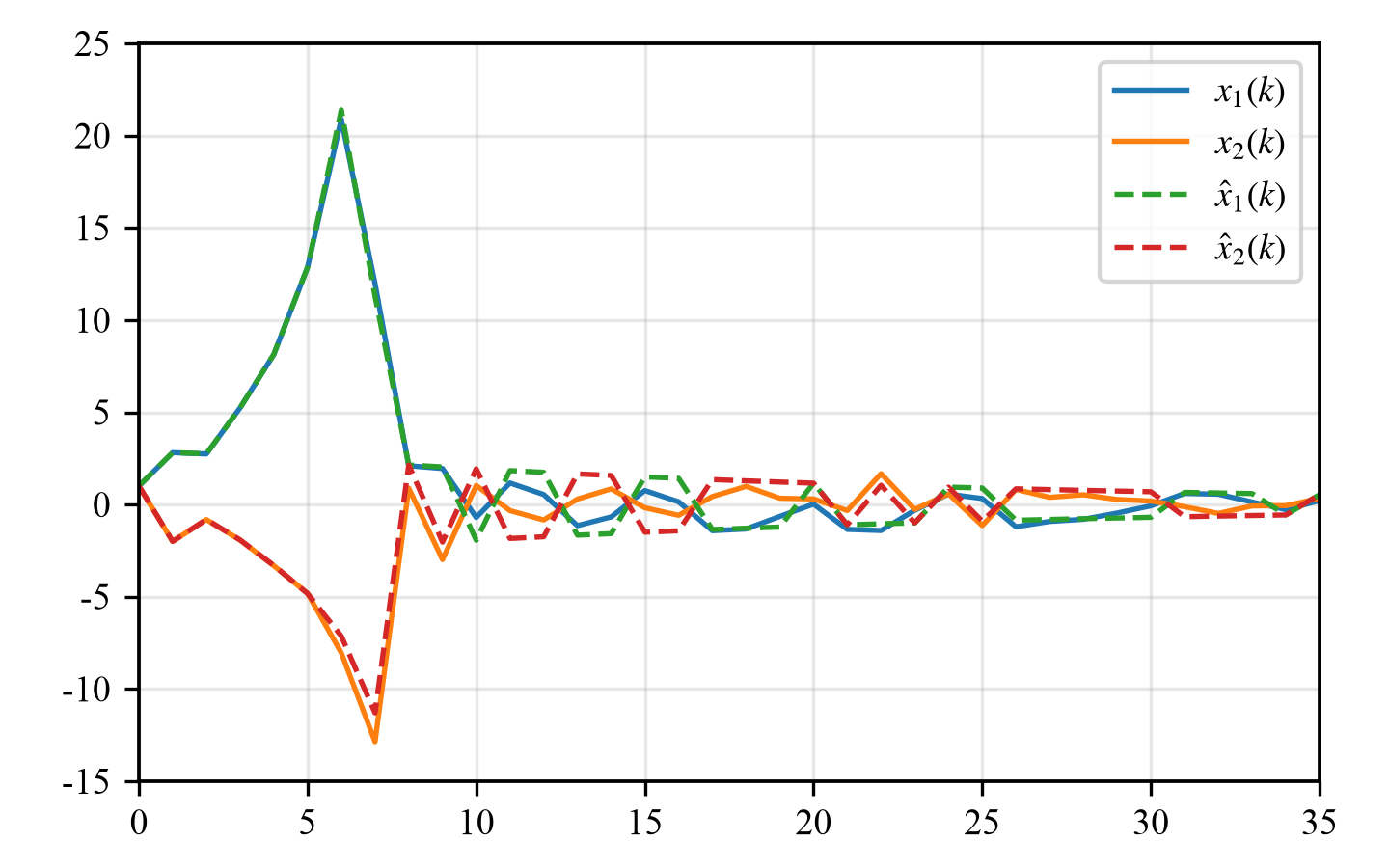}% 
\caption{}
\label{fig:sim-x}
\end{subfigure}%
\caption{Evolution of
(\subref{fig:sim-E}) the state norm $ \|x(k)\|_\infty $ and quantization bound $ E_k $, and
(\subref{fig:sim-x}) the state components $ x_i(k) $ and quantized measurements $ \hat x_i(k) $.
The design stage ends at $ k = 5 $, and quantized state feedback begins at $ k = 6 $.}
\label{fig:sim}
\end{figure}
Figure~\ref{fig:sim}(\subref{fig:sim-E}) shows that the state remains within the quantization range and converges to the origin.
During the design stage ($ k = 0, \ldots, 5 $), the quantizer maintains the prescribed quantization error bound.
At the transition to stabilization, the quantization range increases to $ E_{T+1} = 118.9 $ according to \eqref{eq:quant-bnd-stab-ini-data}, and thereafter decreases exponentially with $ \mu = 0.95 $.

Figure~\ref{fig:sim}(\subref{fig:sim-x}) shows that the quantized measurements closely track the states during data collection, and that both the states and quantized measurements converge to zero during stabilization.

The sufficient bound in Theorem~\ref{thm:stab-conv} is conservative in this example. 
Although it requires $ N_s \geq 2624 $, convergence is achieved with $ N_s = 50 $ in simulation.
This conservativeness is due partly to the worst-case norm bounds in the analysis and the conversion between the hypercubic quantization ranges and the Lyapunov sublevel sets.
More careful analysis may substantially reduce the quantization requirement.

The trade-off between the quantization requirements for controller design and feedback stabilization is investigated in \cite{LiYang2026Allerton}.

%%%%%%%%%%%%%%%%%%%%%%%%%%%%%%%%%%%%%%%%%%%%%%%%%%%%%%%%%%%%%%%%%%%%%%%%%%%%%%%%
\section{Conclusion}\label{sec:end}
This paper developed a data-driven framework for stabilizing unknown discrete-time linear systems using only quantized state measurements that take values in finite sets. 
In the controller design stage, quantized trajectory data are used to construct a feedback gain together with a Lyapunov matrix, with sufficient conditions established to guarantee closed-loop stability. 
In the stabilization stage, a Lyapunov-based quantizer with a fixed finite number of quantization cells ensures exponential convergence of the closed-loop system.
Thus, both controller design and feedback stabilization are achieved under finite data rates.

Future research directions include reducing the conservativeness of the sufficient quantization requirements, 
removing the reliance on known bounds on $ \|A\| $ and $ \|B\| $,
and extending the proposed framework to continuous-time LTI systems and switched systems.

%%%%%%%%%%%%%%%%%%%%%%%%%%%%%%%%%%%%%%%%%%%%%%%%%%%%%%%%%%%%%%%%%%%%%%%%%%%%%%%%
%% Reference

%\bstctlcite{IEEEexample:BSTcontrol}
\footnotesize
\addcontentsline{toc}{section}{References}
\bibliography{reference,reference-temp}

@article{PersisTesi2020,
author = {{De Persis}, Claudio and Tesi, Pietro},
journal = {IEEE Transactions on Automatic Control},
month = mar,
number = {3},
pages = {909--924},
title = {{Formulas for data-driven control: Stabilization, optimality, and robustness}},
volume = {65},
year = {2020}
}

@inproceedings{CoulsonLygerosDorfler2019,
address = {Naples, Italy},
author = {Coulson, Jeremy and Lygeros, John and D{\"{o}}rfler, Florian},
booktitle = {2019 European Control Conference},
pages = {307--312},
title = {{Data-enabled predictive control: In the shallows of the DeePC}},
year = {2019}
}

@article{WaardeCamlibelMesbahi2022,
author = {van Waarde, Henk J. and Camlibel, M. Kanat and Mesbahi, Mehran},
journal = {IEEE Transactions on Automatic Control},
month = jan,
number = {1},
pages = {162--175},
title = {{From noisy data to feedback controllers: Nonconservative design via a matrix S-lemma}},
volume = {67},
year = {2022}
}

@article{MartinSchonAllgower2023,
author = {Martin, Tim and Sch{\"{o}}n, Thomas B. and Allg{\"{o}}wer, Frank},
journal = {Annual Reviews in Control},
pages = {100911},
title = {{Guarantees for data-driven control of nonlinear systems using semidefinite programming: A survey}},
volume = {56},
year = {2023}
}

@book{HornJohnson1991,
address = {Cambridge, UK},
author = {Horn, Roger A. and Johnson, Charles R.},
publisher = {Cambridge University Press},
title = {{Topics in Matrix Analysis}},
year = {1991}
}

@article{JiangLiu2013,
author = {Jiang, Zhong-Ping and Liu, Tengfei},
journal = {Acta Automatica Sinica},
month = nov,
number = {11},
pages = {1820--1830},
title = {{Quantized nonlinear control --- A survey}},
volume = {39},
year = {2013}
}

@article{MarkovskyDorfler2021,
author = {Markovsky, Ivan and D{\"{o}}rfler, Florian},
journal = {Annual Reviews in Control},
pages = {42--64},
title = {{Behavioral systems theory in data-driven analysis, signal processing, and control}},
volume = {52},
year = {2021}
}

@article{WillemsRapisardaMarkovskyMoor2005,
author = {Willems, Jan C. and Rapisarda, Paolo and Markovsky, Ivan and {De Moor}, Bart L.M.},
journal = {Systems \& Control Letters},
month = apr,
number = {4},
pages = {325--329},
title = {{A note on persistency of excitation}},
volume = {54},
year = {2005}
}

@article{Fu2024,
author = {Fu, Minyue},
journal = {IEEE/CAA Journal of Automatica Sinica},
month = jan,
number = {1},
pages = {5--17},
title = {{A tutorial on quantized feedback control}},
volume = {11},
year = {2024}
}

@article{RotuloPersisTesi2022,
author = {Rotulo, Monica and {De Persis}, Claudio and Tesi, Pietro},
journal = {Automatica},
month = nov,
pages = {110519},
title = {{Online learning of data-driven controllers for unknown switched linear systems}},
volume = {145},
year = {2022}
}

@article{ChangHuangWangLiu2020,
author = {Chang, Xiao-Heng and Huang, Rui and Wang, Huanqing and Liu, Liang},
journal = {IEEE Transactions on Circuits and Systems II: Express Briefs},
month = apr,
number = {4},
pages = {730--734},
title = {{Robust design strategy of quantized feedback control}},
volume = {67},
year = {2020}
}

@article{XuSunWeDuan2024,
author = {Xu, Tao and Sun, Zhiyong and Wen, Guanghui and Duan, Zhisheng},
journal = {IEEE Transactions on Automatic Control},
month = dec,
number = {12},
pages = {8804--8811},
title = {{Data-driven dynamic event-triggered control}},
volume = {69},
year = {2024}
}

@article{WaardeEisingTrentelmanCamlibel2020,
author = {van Waarde, Henk J. and Eising, Jaap and Trentelman, Harry L. and Camlibel, M. Kanat},
journal = {IEEE Transactions on Automatic Control},
month = nov,
number = {11},
pages = {4753--4768},
title = {{Data informativity: A new perspective on data-driven analysis and control}},
volume = {65},
year = {2020}
}

@article{CoulsonWaardeLygerosDorfler2023,
author = {Coulson, Jeremy and van Waarde, Henk J. and Lygeros, John and D{\"{o}}rfler, Florian},
journal = {IEEE Control Systems Letters},
pages = {1243--1248},
title = {{A quantitative notion of persistency of excitation and the robust fundamental lemma}},
volume = {7},
year = {2023}
}

@article{PersisTesi2021,
author = {{De Persis}, Claudio and Tesi, Pietro},
journal = {Automatica},
month = jun,
pages = {109548},
title = {{Low-complexity learning of linear quadratic regulators from noisy data}},
volume = {128},
year = {2021}
}

@article{YangLiberzon2018,
author = {Yang, Guosong and Liberzon, Daniel},
journal = {IEEE Transactions on Automatic Control},
month = jul,
number = {7},
pages = {2107--2122},
title = {{Feedback stabilization of switched linear systems with unknown disturbances under data-rate constraints}},
volume = {63},
year = {2018}
}

@incollection{Astrom1991,
address = {Berlin, Heidelberg},
author = {{\AA}str{\"{o}}m, Karl J.},
booktitle = {Mathematical System Theory},
editor = {Antoulas, Athanasios C.},
pages = {437--450},
publisher = {Springer Berlin Heidelberg},
title = {{Adaptive Control}},
year = {1991}
}

@article{LiZhaoYou2024,
author = {Li, Xingchen and Zhao, Feiran and You, Keyou},
journal = {Journal of Systems Science and Complexity},
month = feb,
number = {1},
pages = {152--168},
title = {{Data-driven control of linear systems via quantized feedback}},
volume = {37},
year = {2024}
}

@book{Liberzon2003Book,
address = {Boston, MA, USA},
author = {Liberzon, Daniel},
publisher = {Birkh{\"{a}}user},
title = {{Switching in Systems and Control}},
year = {2003}
}

@article{Liberzon2014,
author = {Liberzon, Daniel},
journal = {Automatica},
month = feb,
number = {2},
pages = {409--420},
title = {{Finite data-rate feedback stabilization of switched and hybrid linear systems}},
volume = {50},
year = {2014}
}

@article{SongChang2020,
author = {Song, Jia-Sheng and Chang, Xiao-Heng},
journal = {Applied Mathematics and Computation},
month = jul,
pages = {125070},
title = {{$H_\infty$ controller design of networked control systems with a new quantization structure}},
volume = {376},
year = {2020}
}

@article{TakakCetinkayaIshii2025,
author = {Takaki, Iori and Cetinkaya, Ahmet and Ishii, Hideaki},
journal = {IFAC-PapersOnLine},
number = {4},
pages = {103--108},
title = {{Trade-off in quantization between data-driven design and control inputs}},
volume = {59},
year = {2025}
}

@article{ZieglerNichols1942,
author = {Ziegler, John G. and Nichols, Nathaniel B.},
journal = {Transactions of the American Society of Mechanical Engineers},
month = nov,
number = {8},
pages = {759--765},
title = {{Optimum settings for automatic controllers}},
volume = {64},
year = {1942}
}

@inproceedings{TangDaoutidis2022,
address = {Atlanta, GA, USA},
author = {Tang, Wentao and Daoutidis, Prodromos},
booktitle = {2022 American Control Conference},
pages = {1048--1064},
title = {{Data-driven control: overview and perspectives}},
year = {2022}
}

@article{ZamaniYang2026,
author = {Zamani, Mahmoud and Yang, Guosong},
journal = {IEEE Control Systems Letters},
pages = {1501--1506},
title = {{Input-to-state stabilization of linear systems under data-rate constraints}},
volume = {10},
year = {2026}
}

@article{LiuJiang2019,
author = {Liu, Tengfei and Jiang, Zhong-Ping},
journal = {IEEE Transactions on Automatic Control},
month = feb,
number = {2},
pages = {797--803},
title = {{Event-triggered control of nonlinear systems with state quantization}},
volume = {64},
year = {2019}
}

@inproceedings{MillerZhengSznaierHixenbaugh2024,
address = {Toronto, Canada},
author = {Miller, Jared and Zheng, Jian and Sznaier, Mario and Hixenbaugh, Chris},
booktitle = {2024 American Control Conference},
month = jul,
pages = {1146--1151},
title = {{Data-driven superstabilization of linear systems under quantization}},
year = {2024}
}

@inproceedings{LiYang2026Allerton,
address = {Urbana, IL, USA},
note = {to be published},
author = {Li, Zhenghao and Yang, Guosong},
booktitle = {62nd Annual Allerton Conference on Communication, Control, and Computing},
month = sep,
title = {{On data-driven control of linear systems with quantized data}},
year = {2026}
}

@article{HouWang2013,
author = {Hou, Zhong-Sheng and Wang, Zhuo},
journal = {Information Sciences},
month = jun,
pages = {3--35},
title = {{From model-based control to data-driven control: Survey, classification and perspective}},
volume = {235},
year = {2013}
}

@article{NairFagnaniZampieriEvans2007,
author = {Nair, Girish N. and Fagnani, Fabio and Zampieri, Sandro and Evans, Robin J.},
journal = {Proceedings of the IEEE},
month = jan,
number = {1},
pages = {108--137},
title = {{Feedback control under data rate constraints: An overview}},
volume = {95},
year = {2007}
}

\end{document}